\def\z+{{\mathbb Z}_+}
\def\supp{{\rm{\,supp\,}}}
\def\bint{{\ifinner\rlap{\bf\kern.30em--}
\int\else\rlap{\bf\kern.35em--}\int\fi}\ignorespaces}
\def\sbint{{\ifinner\rlap{\bf\kern.32em--}
\hspace{0.078cm}\int\else\rlap{\bf\kern.45em--}\int\fi}\ignorespaces}
\newtheorem{theorem}{Theorem}[section]
\newtheorem{lemma}[theorem]{Lemma}
\newtheorem{proposition}[theorem]{Proposition}
\theoremstyle{definition}
\newtheorem{remark}[theorem]{Remark}
\newtheorem{definition}[theorem]{Definition}
\numberwithin{equation}{section}
\numberwithin{equation}{section}
\numberwithin{equation}{section}
\begin{document}

\arraycolsep=1pt

\title{\Large\bf A new type of $bmo$ space for non-doubling measures \footnotetext{\hspace{-0.35cm} {\it 2020
Mathematics Subject Classification}. {42B35, 43A99.}
\endgraf{\it Key words and phrases.} non-doubling measure, $rbmo(\mu)$, $RBMO(\mu)$, cube of generation.
\endgraf This project is supported by the National Natural Science Foundation of China (Grant Nos. 12261083, 12161083) and Xinjiang Key Laboratory of Applied Mathematics (No. XJDX1401).
\endgraf $^\ast$\,Corresponding author.
}}
\author{Shining Li, Haijing Zhao and Baode Li$^\ast$}
\date{ }
\maketitle

\vspace{-0.8cm}

\begin{center}
\begin{minipage}{13cm}\small
{\noindent{\bf Abstract.}
Let $\mu$ be a Radon measure on $\mathbb R^{d}$ which may be non-doubling and only satisfies $\mu(Q(x,l))\le C_{0}l^{n}$}  for all $x\in \mathbb R^{d}$, $l(Q)>0$, with some fixed constants $C_{0}>0$ and $n\in (0,d]$. We introduce a new type of $bmo(\mu)$ space  which looks bigger than the  $rbmo(\mu)$ space of Dachun Yang (JAMS,\,2005).  And its four equivalent norms are established by constructing some special types of auxiliary doubling cubes. Then we further obtain that this new $rbmo(\mu)$ space   actually coincides with the  $rbmo(\mu)$ space of Dachun Yang.
\end{minipage}
\end{center}


\section{Introduction\label{s1}}

Let $d\in \mathbb N$  and $0<n \le d $. We assume that   $\mu$ is a positive Radon measure on $\mathbb R^{d}$ satisfying the $growth \ condition$
\begin{equation}\label{eq1.1}
\mu(Q(x,l))\le C_{0}l^{n}  \quad    \mathrm{for\ all}\ x \in \mathbb R^{d} \  \mathrm{and} \ l>0 \  \mathrm{ with \ some \ fixed \ constant} \ C_{0}>0.
\end{equation}
We don't assume that $\mu$ is doubling and the cube in the measure can also be replaced by a ball.

It is well known that the doubling condition on Radon measure plays an important role in most results of classical function spaces, Harmonic Analysis and so on. However,  in 2002,  it has been shown that many results in the classical Calder\'on-Zygmund  theory and the classical
$Hardy$ and $BMO$ spaces also hold without the doubling assumption  and only need the measure to satisfy the growth condition in \cite{4,6}.

In 1961,  John and Nirenberg \cite{X1} introduced the $BMO$ space. The space $BMO$ shares similar properties with the space
$L^{\infty}$, and it often serves as a substitute for it. For instance, classical singular integrals do not map $L^{\infty}$ to $L^{\infty}$ but $L^{\infty}$ to $BMO$. And in many instances interpolation
between $L^{p}$ and $BMO$ works just as well between $L^{p}$ and $L^{\infty}$. In 1979, Goldberg \cite{1} introduced a local $BMO$-type space $bmo$ which is the dual of the local $Hardy$ space $h^{1}$.
\begin{definition}\label{d1.7}
\begin{itemize}
\item[\rm(i)]\cite[p.15]{1}
A function $f\in L_{loc}(\mathbb R^{d})$ is said to be in the space $bmo(\mathbb R^{d})$ if there exists a nonnegative constant $C_{a}$ such that, for any cube $Q$ with $l(Q)< 1$,
\begin{equation}\label{eq1.7}
 \frac{1}{|Q|} \int_{Q}\bigg|f(y)-\frac{1}{|Q|} \int_{Q}f(x)dx \bigg|dy \le C_{a}
\end{equation}
and that, for any cube $Q$ with $l(Q)\in [1,\infty)$,
\begin{equation}\label{eq1.8}
\frac{1}{|Q|} \int_{Q}|f(y)|dy\le C_{a}  .
\end{equation}
Then, we write $\|f\|_{bmo(\mathbb R^{d})}=\inf C_{a}$, where the infimum is taken over all the constants $C_{a}$.
\item[\rm(ii)]\cite[p.2]{6} If there exists some constant $C_{b}>0$  such that (\ref{eq1.7}) holds for $C_{b}$ instead of $C_{a}$, (\ref{eq1.8}) holds for any cube $Q$ with $l(Q)=1$ and $C_{b}$ instead of $C_{a}$. Then, the best constants $C_{b}$ is comparable to the $bmo(\mathbb R^{d})$ norm of $f$.
\item[\rm(iii)]If there exists some constant $C_{c}>0$ and a fixed constant $k>1$ such that (\ref{eq1.7}) holds for $C_{c}$ instead of $C_{a}$, (\ref{eq1.8}) holds for any cube $Q$ with $l(Q)\in[1,k]$ and $C_{c}$ instead of $C_{a}$. Then, by (i) and (ii), it is easy to check that the best constants $C_{c}$ is comparable to the $bmo(\mathbb R^{d})$ norm of $f$.
\end{itemize}
\end{definition}
 From Definition \ref{d1.7}, it implies that the set of functions in $bmo(\mathbb R^{d})$ doesn't depend on the choice of the value range of $l(Q)$ in (\ref{eq1.8}).

In 2000,  Mateu, Mattila, Nicolau and Orobitg  \cite{3} wanted to find a good substitute for the space $BMO$ when
the underlying measure is non-doubling and  hoped to prove some results via $BMO$-$H^{1}$ interpolation, but in this respect they were unsuccessful. Then, in 2001, Tolsa \cite{4} found a suitable substitute for the classical $BMO$ space in this setting, which is denoted by $RBMO(\mu)$. This space is small enough to posses the properties
such as the John-Nirenberg inequality and big enough for Calder\'on-Zygmund operators which are bounded on $L^{2}(\mu)$ to be also bounded from $L^{\infty}(\mu)$ into $RBMO(\mu)$ (see \cite[p.2]{XX}). The  $(\alpha, \beta)$-doubling cubes and the coefficients $K_{Q,R}$ introduced by Tolsa play  important roles for their $RBMO$ theory. Precisely,

\begin{definition}\cite[p.6]{4}\label{d1.2}
Let $0<n\le d$, $\alpha >1$ and $\beta >\alpha^{n}$.  We say that some cube $ Q \subset \mathbb R^{d}$ is $(\alpha,\beta)$-$doubling$ if $\mu(\alpha Q) \le \beta \mu(Q)$.
\end{definition}

In \cite[p.8]{4}, for any cube $Q\subset \mathbb R^{d}$, Tolsa introduced the smallest expansive concentric $(\alpha,\beta)$-doubling cube of $Q$, denoted as $\widetilde Q:=\alpha^{m}Q$ with some $m\in \mathbb N$, and the existence of such cube is guaranteed by Proposition \ref{p1.3}(i) below.

\begin{definition}\cite[p.6]{4}\label{d1.4}
Give two cubes $Q,R\subset \mathbb R^{d}$ with $Q\subset R $. We denote
$$K_{Q,R}:=1+\sum_{k=1}^{N_{Q,R}}\frac{\mu(2^{k}Q)}{l(2^{k}Q)^{n}} ,  $$
where $N_{Q,R}$ is the first integer $k$ such that $l(2^{k}Q)\ge l(R)$ (in case $R=\mathbb R^{d} \not= Q$, we set $K_{Q,R}=\infty$).
\end{definition}

\begin{definition}\cite[p.8]{4}\label{d1.1}
 Let $0<n\le d$,   $ 1<\eta,\, \alpha <\infty$ and $\beta>\alpha^{n}$. We say that $f\in L_{loc}(\mu ) $ is in $RBMO(\mu)$ if there exists some constant $C_{1}\ge 0$ such that, for any cube $ Q$,
 \begin{equation}\label{eq1.2}
\frac{1}{\mu(\eta Q)} \int _{Q}\left | f(x)-m_{\widetilde{Q}}f  \right | d\mu(x) \le C_{1},
\end{equation}
and   for any two $(\alpha,\beta)$-doubling cubes $Q\subset R $,
\begin{equation}\label{eq1.3}
| m_{Q}f-m_{R}f   |  \le C_{1}K_{Q,R}  ,
\end{equation}
where $m_{Q}f$ stands for the mean of $f$ over $Q$ with respect to $\mu$, i.e., $$m_{Q}f:=\frac{1}{\mu(Q)}\int_{Q}f(x)d\mu(x).$$

The infimum  constant $C_{1}$ is the $RBMO(\mu)$ norm of $f$, and  denoted  by $||f||_{RBMO(\mu)}$.
\end{definition}

In 2005, Dachun Yang \cite{5} considered the local versions of the space $RBMO(\mu)$, i.e.,  $rbmo(\mu)$  in the sense of Goldberg \cite{1} by using ideas coming from \cite{4}.
\begin{definition}\label{d1.6}\cite[Definition 2.1(ii)]{5}
Let $ 1< \eta \le \alpha < \infty $, $0<n\le d $ and $\beta \ge \alpha^{n} $. We say that $f\in L_{loc}(\mu ) $ is in $rbmo(\mu)$ if there exists some constant $C_{2}\ge 0$ such that, for any cube $ Q$ with $l(Q)\le1$,
   \begin{equation}\label{eq1.4}
\frac{1}{\mu(\eta Q)} \int _{Q}\left | f(x)-m_{\widetilde{Q}} f \right | d\mu(x) \le C_{2},
\end{equation}
and that, for any two $(\alpha ,\beta  ) $-doubling cubes $Q\subset R $ with $l(Q)\le 1$,
\begin{equation}\label{eq1.5}
| m_{Q}f-m_{R}f   |  \le C_{2}K_{Q,R}  ,
\end{equation}
and that, for any cube $Q$ with $l(Q)>1$,
    \begin{equation}\label{eq1.6}
\frac{1}{\mu(\eta Q)} \int _{Q}\left | f(x)  \right | d\mu(x) \le C_{2}.
\end{equation}
We define the $rbmo(\mu)$ norm of $f$ by the infimum constant $C_{2}$ and we denote this by $\|f\|_{rbmo(\mu)}$.
\end{definition}
 Furthermore,  Yang \cite{5} gave some basic properties including several equivalent definitions of this local space. By using these properties, he established the John-Nirenberg's inequality for the functions in the  $rbmo(\mu)$ space and a $rbmo(\mu)$-$h^{1}(\mu)$ interpolation of  linear operators.

In \cite{5}, the $rbmo(\mu)$ space  introduced by  Yang is the non-doubling version of Definition \ref{d1.7}(i).
So, naturally, two questions come out that  whether the non-doubling versions of (ii) and (iii) of Definition \ref{d1.7} are equivalent with $rbmo(\mu)$ of Yang. In this article, we have made some progress for Definition \ref{d1.7}(iii) but still open for Definition \ref{d1.7}(ii). Precisely, for the definition of the $rbmo(\mu)$ space, if we only replace the condition $l(Q)>1$ of (\ref{eq1.6})  to $1<l(Q)\le k$, we still don't know how to prove the equivalent of them. Alternatively, if the condition $l(Q)>1$ of (\ref{eq1.6}) is replaced by $\mathscr Q_{ex} \setminus \mathscr Q$, where
$$ \mathscr{Q}:=\left \{Q:l(Q)\le 1\right \}, \  \mathscr{Q}_{ex}:=\left \{Q:l(Q')\le 1\right \},$$
$Q'$ denotes the biggest contractive concentric   $(\alpha,\beta)$-doubling cube of a cube $Q\subset \mathbb R^{d}$, i.e., $Q':=\alpha^{-N} Q$ with some $N\in \mathbb Z_{+}$, and  the existence of such  $(\alpha,\beta)$-doubling cube is guaranteed  by Proposition \ref{p1.3}(ii).
Then the equivalence of this new $rbmo(\mu)$ space and $rbmo(\mu)$ space of Yang can be verified in this article. The new space is denoted as $rbmo^{1}_{\mathscr Q}(\mu)$, which is a non-doubling version of Definition \ref{d1.7}(iii). Here we remark that $\mathscr Q_{ex} \setminus \mathscr Q=\left\{Q\subset \mathbb R^{d}:1<l(Q)\le \alpha\right\}$ when $\mu$ is doubling.

\begin{definition}\label{d1x.1}
Let $ \eta \in (1,\infty)$ and $ \alpha \in [\eta,\infty)$,  $ \mathscr{Q}:=\left \{Q:l(Q)\le 1\right \}$ and $ \mathscr{Q}_{ex}:=\left \{Q:l(Q')\le 1\right \}$. We say that $f\in L_{loc}(\mu ) $ is in the $rbmo^{1}_{\mathscr Q}(\mu)$ space if there exists some constant $C_{3}\ge 0$ such that, for any cube $ Q\in \mathscr{Q}$,
\begin{equation}\label{eq2.1}
\frac{1}{\mu(\eta Q)} \int _{Q}\left | f(x)-m_{\widetilde{Q} }f  \right | d\mu(x) \le C_{3},
\end{equation}
and for any two $(\alpha ,\beta  ) $-doubling cubes $Q\subset R $ with $Q\in \mathscr{Q}$ and $R\in \mathscr{Q}_{ex} $,
\begin{equation}\label{eq2.2}
| m_{Q}f-m_{R}f   |  \le C_{3}K_{Q,R}  ,
\end{equation}
and for any cube  $Q\in \mathscr{Q}_{ex} \setminus  \mathscr{Q}$,
\begin{equation}\label{eq2.3}
\frac{1}{\mu(\eta Q)} \int _{Q}\left | f(x)  \right | d\mu(x) \le C_{3} .
\end{equation}
We write $ \|f\|_{\ast1}= \inf C_{3} $, where the infimum  is taken over all the constants $C_{3}$  satisfying (\ref{eq2.1}), (\ref{eq2.2}) and (\ref{eq2.3}).

\end{definition}

From the definitions of $rbmo(\mu)$ and $rbmo^{1}_{\mathscr Q}(\mu)$, it is easy to see that $rbmo(\mu) \subset rbmo^{1}_{\mathscr Q}(\mu)$. By using the  Besicovich  covering lemma and some subtle estimates related to $(\alpha,\beta)$-doubling cubes, we further obtain that they  are actually the same space with equivalent norms which is the following Theorem \ref{t3.1}. Therefore,  the $rbmo^{1}_{\mathscr Q}(\mu)$ space  inherits all the properties of $rbmo(\mu)$ of Yang such as John-Nirenberg's inequality (see \cite[Theorem 2.6]{5})  for the functions in $rbmo(\mu)$ space and that the dual space of local Hardy space $h_{1}(\mu)$ is $rbmo(\mu)$ space (see \cite[Lemma 4.2.17]{6}), and so on.

 The organization of this article is as follows.  In Section 2, we obtain some results for $(\alpha,\beta)$-doubling cubes such as $(\alpha,\beta)$-doubling cubes $Q'$ and $\widetilde Q$ of a cube $Q\subset \mathbb R^{d}$ and two families of cubes $\mathscr Q_{ex}$ and $\mathscr Q$.
 In Section 3, four non-doubling versions of Definition \ref{d1.7}(iii) are introduced and their equivalence are also verified.
 Then Section 4 is devoted to prove $rbmo(\mu)=rbmo^{1}_{\mathscr Q}$ and $rbmo(\mu) \subset RBMO(\mu)$.

\section*{Notation}
\ \,\ \ $-\mathbb Z:$ the set of integers;

 $-\mathbb Z_{+}:=\left\{1,2,,3,\dots\right\}$;

 $-\mathbb N := \left\{0,1,2,\dots \right\}$;

$-\mu:$ a positive Radon measure on $\mathbb R^{d}$;

$-\supp(\mu):=
(\bigcup\limits_{\mu(E_{\alpha})=0}E_{\alpha})^{\complement}$;

$-\l(Q):$ the side length  of $Q$;

$-\ x_{Q}:$ the center  of $Q$;

$-\eta Q:$ the cube concentric with $Q$ with side length $\eta l(Q)$;

$-m_{Q}f:=\frac{1}{\mu(Q)}\int_{Q}f(x)d\mu(x)$;

$-\left\{f_{Q}\right\}_{Q}:$ a collection of numbers that is, for each cube $Q$ there exists $f_{Q}\in \mathbb R$ ;

$-C:$ a positive constant which is independent of the main parameters, but it may vary from line to line. Constant with subscripts, such as $C_{1}$ and $C_{2}$, do not change in different occurrences.




\section{Some results for $(\alpha,\beta)$-doubling cubes \label{s2}}

In this section, we will prove the existence of the two special $(\alpha,\beta)$-doubling cubes $\widetilde Q$ and $Q'$ for a cube $Q\subset \mathbb R^{d}$.
Then some basic properties of $\mathscr Q_{ex}$ and $\mathscr Q$ will be given.
Let us begin with the Besicovich covering lemma, which is very useful and important in our context.
\begin{lemma}(\cite[Theorem 1.1.1]{6})\label{ll2.1}
Let $E$ be a bouned set in $\mathbb R^{d}$. If, for every $x\in E$, there exists a closed cube $Q_{x}$ centered at $x$, then it is possible to choose, from among the given cubes $\left\{Q_{x}\right\}_{x\in E}$, a subsequence $\left\{Q_{k}\right\}_{k}$ (possibly finite) such that
\begin{itemize}
\item[\rm(i)] $E\subset \bigcup\limits_{k}Q_{k};$
\item[\rm(ii)] no point of $\mathbb R^{d}$ is in more than $C_{d}$ (a number that only depends on d) cubes of the sequences $\left\{Q_{k}\right\}_{k}$, namely, for every $z\in \mathbb R^{d}$,
    $$\sum_{k}\chi_{Q_{k}}(z)\le C_{d}.$$
\end{itemize}
\end{lemma}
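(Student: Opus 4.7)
The plan is to establish the covering lemma by a greedy selection procedure, where at each stage we extract a cube whose side length is nearly maximal among those still needed, and then to analyze overlaps at a fixed point geometrically. First I would reduce to the case where the side lengths are bounded: since $E$ is bounded, if $\sup_{x\in E} l(Q_x)=\infty$ then a single cube of the sequence already covers $E$, so I may assume $M_0 := \sup_{x\in E} l(Q_x) < \infty$.

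Next I would construct $\{Q_k\}_k$ inductively. Having chosen $Q_1,\ldots,Q_{j-1}$, let $E_j := E \setminus \bigcup_{i<j} Q_i$ and $M_j := \sup_{x\in E_j} l(Q_x)$. If $E_j=\emptyset$, stop; otherwise pick $x_j\in E_j$ with $l(Q_{x_j}) > M_j/2$ and set $Q_j := Q_{x_j}$. To prove (i), I would show $E\subset \bigcup_k Q_k$: if some $x\in E$ were never covered, then $l(Q_x) \le M_j$ for every $j$, but $M_j\to 0$ because otherwise an infinite subfamily of cubes of comparable size with pairwise separated centers (by the greedy rule, $x_k\notin Q_j$ for $j<k$ whenever $l(Q_k)\le l(Q_j)$) would lie in a bounded neighborhood of $E$, contradicting local finiteness of Lebesgue measure after a routine volume count.

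The core step is (ii), the bounded multiplicity, and this is where I expect the real difficulty. Fix $z\in\rr^d$ and consider $\mathcal{I}(z):=\{k : z\in Q_k\}$. For indices $j<k$ in $\mathcal{I}(z)$ the greedy rule gives $l(Q_k) \le 2\,l(Q_j)$ and, crucially, $x_k \notin Q_j$; combining $z\in Q_j$ (so $\|z-x_j\|_\infty \le l(Q_j)/2$), $x_k\in Q_k$ (so $\|z-x_k\|_\infty \le l(Q_k)/2 \le l(Q_j)$), and $x_k\notin Q_j$ (so $\|x_j-x_k\|_\infty > l(Q_j)/2$), one obtains a uniform two-sided bound on $\|x_k-z\|$ and a uniform lower bound on the separation of any two centers $x_j,x_k$ with $j,k\in\mathcal{I}(z)$ in terms of the smaller side length. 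A volume/packing argument in $\rn$ then bounds $|\mathcal{I}(z)|$ by a constant $C_d$ depending only on $d$: the normalized vectors $(x_k-z)/\|x_k-z\|$ must be angularly separated by a fixed amount, and only $C_d$ such directions fit on the unit sphere.

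The main obstacle is making the geometric separation estimate in the third paragraph quantitative with a dimension-only constant; this requires carefully sorting cubes in $\mathcal{I}(z)$ by size, applying the three constraints above to pairs, and invoking a packing estimate on $S^{d-1}$. Once the multiplicity bound is in place, claims (i) and (ii) of Lemma \ref{ll2.1} follow. Since this is a classical result taken from \cite[Theorem 1.1.1]{6}, in the paper itself it is only invoked, but the above sketches the route one would take if proving it directly.
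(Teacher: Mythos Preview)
Your proposal is appropriate: in the paper this lemma is simply quoted from \cite[Theorem~1.1.1]{6} with no proof given, exactly as you note in your final sentence, so there is no argument in the paper to compare against. The sketch you supply is the standard greedy selection proof of the Besicovich covering theorem and is essentially correct in outline; the delicate step, as you rightly flag, is turning the constraints $\|z-x_j\|_\infty\le l(Q_j)/2$, $\|z-x_k\|_\infty\le l(Q_k)/2<l(Q_j)$, and $\|x_j-x_k\|_\infty>l(Q_j)/2$ into a dimension-only multiplicity bound, which in full rigor usually requires splitting $\mathcal{I}(z)$ into size layers before invoking a packing or angular-separation argument rather than applying the spherical separation directly. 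One minor inaccuracy: the parenthetical ``$x_k\notin Q_j$ for $j<k$ whenever $l(Q_k)\le l(Q_j)$'' is both unnecessary and slightly off, since $x_k\notin Q_j$ holds for every $j<k$ by construction regardless of the side-length comparison.
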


In Proposition \ref{p1.3}, we  obtain that there exist a lot of big $(\alpha,\beta)$-doubling cubes by (i) and many small $(\alpha,\beta)$-doubling cubes by (ii). The proof of Proposition \ref{p1.3} is inspired by \cite[Proposition 1.2.2]{6}.
\begin{proposition}\label{p1.3}
Let $0<n\le d$, $\alpha >1$ and $\beta >\alpha^{n}$.
\begin{itemize}
\item[\rm(i)] For $\mu$-almost every $x\in \mathbb R^{d}$ and $c\in(0,\infty)$, there exists some $(\alpha,\beta)$-doubling cube $Q$ centered at $x$ with $l(Q)=\alpha^{N}c$ and $N\in \mathbb N$.
\item[\rm(ii)] If $\beta > \alpha^{d} $, for $\mu$-almost every $x\in \mathbb R^{d}$ and $c\in(0,\infty)$, there exists some $(\alpha,\beta)$-doubling cube $Q$ centered at $x$ with $l(Q)=\alpha^{-N}c$ and $N\in \mathbb Z_{+}$.
\end{itemize}
\end{proposition}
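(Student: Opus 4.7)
\textbf{Plan for Proposition \ref{p1.3}.}

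For part (i), my plan is to argue by contradiction along the increasing sequence of concentric cubes $Q_N:=Q(x,\alpha^{N}c)$, $N\in\mathbb N$. Fix any $x\in\supp(\mu)$, which contains $\mu$-a.e.\ point of $\mathbb R^d$, and note $\mu(Q_0)>0$ since the interior of $Q_0$ is a neighborhood of $x$. If no $Q_N$ were $(\alpha,\beta)$-doubling, then $\mu(Q_{N+1})=\mu(\alpha Q_N)>\beta\mu(Q_N)$ for every $N\ge 0$, so by iteration $\mu(Q_N)>\beta^{N}\mu(Q_0)$. Combining this with the growth condition \eqref{eq1.1}, which gives $\mu(Q_N)\le C_0\alpha^{Nn}c^n$, one obtains $(\beta/\alpha^{n})^{N}<C_0c^n/\mu(Q_0)$. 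Since $\beta>\alpha^{n}$ and the right-hand side is finite and independent of $N$, letting $N\to\infty$ is a contradiction. This part is routine once one observes the support/positivity remark.

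Part (ii) is the real work. The natural attempt, namely to run the same contradiction on the decreasing sequence $Q(x,\alpha^{-N}c)$, does not close because $\mu(Q(x,\alpha^{-N}c))$ has no useful lower bound at a single point. The correct strategy is to prove the stronger statement: the ``bad'' set
\[
F:=\bigl\{x\in\mathbb R^{d}:\ Q(x,\alpha^{-N}c)\ \text{is not $(\alpha,\beta)$-doubling for every }N\in\mathbb Z_+\bigr\}
\]
is $\mu$-null. Fix $R>0$ and bound $\mu(F\cap Q(0,R))$. For each $x\in F$, iterating $\mu(\alpha^{-N+1}Q(x,c))>\beta\,\mu(\alpha^{-N}Q(x,c))$ backwards from $N$ down to $0$ yields
\[
\mu\bigl(Q(x,\alpha^{-N}c)\bigr)<\beta^{-N}\mu\bigl(Q(x,c)\bigr)\le\beta^{-N}C_0c^{n}.
\]

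Next I apply Lemma \ref{ll2.1} to the family $\{Q(x,\alpha^{-N}c)\}_{x\in F\cap Q(0,R)}$ to extract a cover $\{Q(x_j,\alpha^{-N}c)\}_{j}$ of $F\cap Q(0,R)$ with overlap at most $C_d$. A volume count gives $M(\alpha^{-N}c)^{d}\le C_d(R+c)^{d}$, where $M$ is the number of cubes in the cover, so $M\le C_d\bigl((R+c)/c\bigr)^{d}\alpha^{Nd}$. Summing the per-cube bound obtained above,
\[
\mu(F\cap Q(0,R))\le\sum_{j}\mu\bigl(Q(x_j,\alpha^{-N}c)\bigr)\le M\cdot\beta^{-N}C_0c^{n}\le C\,(R+c)^{d}c^{\,n-d}\bigl(\alpha^{d}/\beta\bigr)^{N}.
\]
Since the hypothesis $\beta>\alpha^{d}$ makes $\alpha^{d}/\beta<1$, sending $N\to\infty$ forces $\mu(F\cap Q(0,R))=0$; letting $R\to\infty$ gives $\mu(F)=0$, which is exactly the conclusion of (ii).

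The main obstacle is the shrinking case (ii): the correct quantitative object is not a single cube at $x$ but rather a Besicovich packing, and the estimate succeeds only because the volume bound $M\lesssim\alpha^{Nd}$ is beaten by the doubling constant $\beta^{-N}$ precisely under the hypothesis $\beta>\alpha^{d}$, which explains why this stronger assumption (versus $\beta>\alpha^{n}$ in (i)) appears. The rest of the argument in (ii) is bookkeeping: one must keep the reference cube $Q(x,c)$ inside a fixed large cube $Q(0,R+c)$ so the growth condition applies uniformly, and one must check that the support remark again absorbs the ``$\mu$-almost every'' qualifier.
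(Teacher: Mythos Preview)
Your proposal is correct and follows essentially the same approach as the paper's proof: part (i) is the standard contradiction via iterated non-doubling versus the growth bound, and part (ii) uses Besicovich plus a volume count to balance the cardinality $\lesssim\alpha^{Nd}$ against the per-cube bound $\beta^{-N}$, exploiting $\beta>\alpha^{d}$. The only cosmetic differences are that the paper localizes to small cubes of side $\alpha^{-(m+1)}c$ tiling $\mathbb R^d$ and bounds $\mu(\alpha^{n}Q^n_x)$ by $\mu(3Q)$, whereas you localize to a large cube $Q(0,R)$ and bound $\mu(Q(x,c))$ directly via the growth condition; both routes close identically.
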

\begin{proof}
To prove (i), we assume that (i) doesn't hold true, i.e., there exist some $x_{0}\in \supp(\mu)$ and positive constant $M\in \mathbb N$ such that, for all cubes $Q$ centered at $x_{0}$ with $l(Q)=\alpha^{N}c$ and  the integer $N\ge M$, we have $\mu(\alpha Q)>\beta \mu(Q)$.

Now let us take $Q_{0}$ be such a cube with $\mu(Q_{0})>0$.  By our assumption and the growth condition, we see that, for any $k\in \mathbb N$,
$$\beta^{k}\mu(Q_{0})<\beta^{k-1}\mu(\alpha Q_{0})< \cdots<\beta \mu(\alpha^{k-1}Q_{0})<\mu(\alpha^{k}Q_{0})\le C_{0}\alpha^{kn}[l(Q_{0})]^{n},$$
which in turn implies that
$$\mu(Q_{0})<C_{0}\left(\frac{\alpha^{n}}{\beta}\right)^{k}[l(Q_{0})]^{n}.$$
Letting $k\to \infty$, we have $\mu(Q_{0})=0$, which contracts to $\mu(Q_{0})>0$. This implies that, for any $x_{0}\in \supp(\mu)$, there exists some $(\alpha,\beta)$-doubling cube $Q$ centered at $x_{0}$ with $l(Q)=\alpha^{N}c$, i.e., (i) holds true.

Then we prove (ii). For any fixed $c>0$, $\alpha\in (1,\infty)$ and $\beta \in (\alpha^{d},\infty)$, let
\begin{equation*}
\begin{aligned}
\Omega :=\left\{{x\in \mathbb R^{d}:\mathrm {there \ doesn't \ exist \ any \ (\alpha,\beta)-doubling \ cubes  }}\right.\\
\left. { \mathrm { \ centered \   at } \ x  \ \mathrm { whose\ side \ lengths \ are \ } \alpha^{-N}c \mathrm{ \ for \ all }\ N \in \mathbb Z_{+} }\right\}.
\end{aligned}
\end{equation*}
We show that $\mu(\Omega)=0$. For any $m\in \mathbb Z_{+}$, let
\begin{equation*}
\begin{aligned}
\Omega_{m} :=\left\{{x\in \mathbb R^{d}:\mathrm {all \ cubes \ centered \ at   }\ x \mathrm{\ whose \ side \ lengths \ are \ } \ \alpha^{-N}c}\right.\\
\left. { \mathrm {\ are \ not \ (\alpha,\beta)-doubing \ cubes }  \mathrm{ \ for \ all }\ N \ge m \mathrm {\ with} \ N\in \mathbb Z_{+}} \right\}.
\end{aligned}
\end{equation*}
Observe that $\Omega=\cup^{\infty}_{m=1}\Omega_{m}$. It suffices to prove that $\mu(\Omega_{m})=0$ for any $m\in \mathbb Z_{+}$.

Let us fix a cube $Q$ with $l(Q)=\alpha^{-(m+1)}c$ and denote by $Q^{n}_{x}$ the cube centered at $x$ whose side length is $\alpha^{-n}l(Q) $ for any $x\in \Omega_{m} \cap Q$ and $n\in \mathbb Z_{+}$. By the Besicovitch covering lemma (Lemma \ref{ll2.1}), there exists a sequence of cubes, $\left\{Q^{n}_{k}\right\}_{k\in I_{n}}$, such that
$$\Omega_{m}\cap Q \subset \bigcup_{k\in I_{n}}Q^{n}_{k} .$$
Since the center of $Q^{n}_{k}$ is in $\Omega$ and $l(Q^{n}_{k})=\alpha^{-(m+n+1)}c$, $Q^{n}_{k}$ is not a $(\alpha,\beta)$-doubling cube for each $k$. Therefore, from this and the fact that $\alpha^{n}Q^{n}_{k}\subset 3Q$, it follows that
\begin{equation}\label{2.xx1}
\mu(Q^{k}_{n})< \beta^{-1}\mu(\alpha Q^{k}_{n}) < \dots <\beta^{-n}\mu(\alpha^{n} Q^{k}_{n})\le \beta^{-n}\mu(3Q)
\end{equation}
Then by \cite[(1.2.3)]{6},  we have  $\# (I_{n})\lesssim \alpha^{nd}$. By this and (\ref{2.xx1}), we obtain
$$\mu(\Omega_{m}\cap Q)\le \sum_{k\in I_{n}}\mu(Q^{n}_{k})\lesssim \alpha^{nd}\beta^{-n}\mu(3Q). $$
Letting $n\rightarrow \infty$, we see that $\mu(\Omega_{m}\cap Q)=0$ by $\beta >\alpha^{d}$.

Notice that, for each $m\in \mathbb N$, $\mathbb R^{d}=\cup_{i}Q_{m,i}$, where $\left\{Q_{m,i}\right\}_{i}$ are cubes with
$$l(Q_{m,i})=\alpha^{-(m+1)}c \qquad \mathrm{for\ all}\ i.$$
Then we have
$$\mu(\Omega_{m})\le \sum_{i}\mu(\Omega_{m}\cap Q_{m,i})=0.$$
This further implies that $\mu(\Omega)=0$.
\end{proof}
{\bf Basic Assumption:} In order to guarantee the existence of $Q'$ and $\widetilde Q $ for any cube $Q\subset \mathbb R^{d}$, by Proposition \ref{p1.3}, we always assume that $\alpha>1$ and $\beta>\alpha^{d}$ through the whole article.

\begin{remark}\label{r2.5}

Given a cube $ Q\subset \mathbb R^{d}$, we can pick a cube-chain $\left \{Q_{N}:= \alpha^{N}Q \right \}_{N\in \mathbb Z} $. By Proposition \ref{p1.3}, we know that there are infinite $ (\alpha, \beta)$-doubling cubes in the cube-chain $\left \{Q_{N} \right \}_{N\in \mathbb Z} $.
\begin{itemize}
   \item[\rm(i)] For any  $Q_{n}\in \left \{Q_{N} \right \}_{N\in \mathbb Z}$, $Q_{n}^{'}$ and $\widetilde Q_{n}$ are two $ (\alpha, \beta)$-doubling cubes beside   $Q_{n}$.
$\widetilde Q_{n}$ is the smallest  $ (\alpha, \beta)$-doubling cube with side length greater than $l(Q_{n}^{'} )$ in the cube-chain,  $ Q_{n}'$ is the biggest   $ (\alpha, \beta)$-doubling cube with side length less than $l(\widetilde Q_{n} )$ in the cube-chain. So there are no doubling cubes  between $ Q_{n}'$ and $\widetilde Q_{n}$ in the cube-chain. And for any cube $R $ between $ Q_{n}'$ and $\widetilde Q_{n}$ including $\widetilde Q_{n}$, we have $R'=Q_{n}'$ and $\widetilde R=\widetilde Q_{n}$.

  \item[\rm(ii)] For any $N\in \mathbb Z_{+} $, if $l((\alpha^{N}Q)')>l(Q')$  or $l(\widetilde {\alpha^{N}Q })>l(\widetilde Q)$,  then by (i),  we have $ \widetilde Q \subset (\alpha^{N}Q )'$.
\end{itemize}
\end{remark}

The following proposition was proved by Tolsa in \cite[Lemma 2.1]{4}, which plays a fundamental role in this whole article. From this proposition, it is easy to see that $K_{Q,R}$ reflects some geometric aspects of cubes and has a constant upper bound estimate in many cases.
\begin{proposition}\label{p1.5}
Let $C>0 $ be a constant.
\begin{itemize}
\item[\rm(i)] If $Q\subset R \subset S$ are cubes in $\mathbb R^{d}$, then $K_{Q,R}\le K_{Q,S}$, $K_{R,S}\le CK_{Q,S}$ and $K_{Q,S}\le C(K_{Q,R}+K_{R,S})$.
\item[\rm(ii)] If $Q\subset R$ have comparable sizes, then $K_{Q,R}\le C$.
\item[\rm(iii)] If $N$ is some positive integer and the cubes $\alpha Q,\alpha^{2}Q,\dots , \alpha ^{N-1}Q$ are non $(\alpha,\beta)$-doubling cube with $\beta >\alpha^{n}$, then $K_{Q,\alpha^{N}Q}\le C$ with C depending on $\beta$, $\rho$ and $C_{0}$.
\end{itemize}
\end{proposition}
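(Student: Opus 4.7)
Part (i) decomposes into three statements with different difficulty. The first inequality $K_{Q,R}\le K_{Q,S}$ is immediate from the definition: since $Q\subset R\subset S$ gives $l(R)\le l(S)$ and hence $N_{Q,R}\le N_{Q,S}$, the sum defining $K_{Q,R}$ is a truncation of the one defining $K_{Q,S}$, all of whose terms are nonnegative. For $K_{R,S}\le CK_{Q,S}$, the plan is to use that $Q\subset R$ forces $|x_Q-x_R|_\infty\le l(R)/2$; choosing the minimal $j\in\mathbb N$ with $l(2^jQ)\ge l(R)$, elementary geometry gives $2^kR\subset 2^{k+j+c_0}Q$ for some absolute constant $c_0$, and since $l(2^{k+j+c_0}Q)$ is comparable to $l(2^kR)$ up to a factor depending only on $c_0$, each term $\mu(2^kR)/l(2^kR)^n$ is bounded (up to a constant) by $\mu(2^{k+j+c_0}Q)/l(2^{k+j+c_0}Q)^n$, a term appearing in $K_{Q,S}$. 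For $K_{Q,S}\le C(K_{Q,R}+K_{R,S})$, I would split the sum defining $K_{Q,S}$ at the index $k=N_{Q,R}$: the contribution from $k\le N_{Q,R}$ is at most $K_{Q,R}$ by definition, while for $k>N_{Q,R}$ we have $l(2^kQ)\ge l(R)$, so that $2^kQ$ is comparable in size to (and geometrically close to) a dilation $2^{k'}R$ with $k'\approx k-N_{Q,R}$, yielding the comparison to terms of $K_{R,S}$.

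\textbf{Part (ii).} If $l(Q)\le l(R)\le Cl(Q)$, then $N_{Q,R}$ is bounded by a constant depending only on $C$ (the smallest $k$ with $2^kl(Q)\ge l(R)$), and each summand $\mu(2^kQ)/l(2^kQ)^n$ is at most $C_0$ by the growth condition \eqref{eq1.1}. Summing gives a uniform bound, so $K_{Q,R}\le C$.

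\textbf{Part (iii).} The key observation is that the non-$(\alpha,\beta)$-doubling hypothesis forces geometric growth of $\mu$ backwards along the chain: from $\mu(\alpha^{j+1}Q)>\beta\mu(\alpha^jQ)$ for $j=0,\dots,N-2$ we iterate to obtain
\[
\mu(\alpha^jQ)<\beta^{-(N-j)}\mu(\alpha^NQ)\le C_0\,\beta^{-(N-j)}\alpha^{Nn}\,l(Q)^n,
\]
where the last step uses \eqref{eq1.1} applied to $\alpha^NQ$. Dividing by $l(\alpha^jQ)^n=\alpha^{jn}l(Q)^n$ yields
\[
\frac{\mu(\alpha^jQ)}{l(\alpha^jQ)^n}\le C_0\Bigl(\frac{\alpha^n}{\beta}\Bigr)^{N-j},
\]
so geometric decay with ratio $\alpha^n/\beta<1$ summed over $j$ gives a bound depending only on $C_0$, $\alpha$, $\beta$ (independent of $N$). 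To transfer this to $K_{Q,\alpha^NQ}$, which is expressed in base $2$ rather than base $\alpha$, I would compare the two sums by grouping consecutive powers of $2$ falling between consecutive powers of $\alpha$ and again using \eqref{eq1.1} to bound each individual summand by $C_0$.

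\textbf{Main obstacle.} The only genuinely delicate step is the translation inequality $2^kR\subset 2^{k+j+c_0}Q$ used throughout part (i), which requires a careful geometric argument since $Q$ and $R$ are not assumed concentric; tracking the resulting shift in indices (and ensuring the constant $c_0$ is absolute) is where most of the bookkeeping happens. In part (iii), the mismatch between the base $2$ in the definition of $K_{Q,R}$ and the base $\alpha$ in the hypothesis is a minor technical point, but one must be careful to keep the constants independent of $N$.
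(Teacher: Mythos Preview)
The paper does not give its own proof of this proposition; it simply cites Tolsa \cite[Lemma~2.1]{4}. Your sketch is essentially the standard argument (and in fact the one Tolsa gives), so there is nothing to compare.

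One small slip in part~(iii): the hypothesis is that $\alpha Q,\dots,\alpha^{N-1}Q$ are non-doubling, which yields $\mu(\alpha^{j+1}Q)>\beta\,\mu(\alpha^{j}Q)$ for $j=1,\dots,N-1$, not $j=0,\dots,N-2$ as you wrote (the cube $Q$ itself is not assumed non-doubling). This is harmless, since the $j=0$ term $\mu(Q)/l(Q)^{n}\le C_{0}$ is controlled directly by the growth condition, and the iteration from $j=1$ onward goes through unchanged. Also note that the ``$\rho$'' appearing in the statement of (iii) is a typo in the paper for $\alpha$; your proof correctly tracks the dependence on $C_{0}$, $\alpha$, $\beta$.
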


We know that $\mathscr Q_{ex} =\left\{Q\subset \mathbb R^{d}:l(Q)\le \alpha\right\}$ when $\mu$ is doubling. However,
if $\mu$ is a non-doubling measure, the family of cubes $\mathscr Q_{ex}$ becomes  complex. Some basic  properties are given in the following proposition for $\mathscr Q_{ex}$, the proof of which is easy to check and hence  omitted.

\begin{proposition}\label{p2.4}
Let $\alpha>1$, $\mathscr{Q}:=\left \{Q:l(Q)\le 1\right \}, \  \mathscr{Q}_{ex}:=\left \{Q:l(Q')\le 1\right \}. $
\begin{itemize}
  \item[\rm(i)] For any $Q\in \mathscr Q_{ex}$, there exist a positive integer $N_{1}$ and a nonnegative integer $N_{2} $ such that $Q'=\alpha^{-N_{1}} Q $ and $\widetilde{Q}=\alpha^{N_{2}} Q $. Moreover,   $\alpha^{-N_{1}+1} Q $, $ \alpha^{-N_{1}+2} Q$, $ \dots $, $Q$, $\alpha Q$, $\dots$, $\alpha^{N_{2}} Q \in \mathscr Q_{ex}$.
\item[\rm(ii)]   If $Q \in \mathscr{Q}_{ex} $, then $\widetilde{Q} \in \mathscr Q_{ex}$.
\item[\rm(iii)] If $Q \in \mathscr Q$, then $\widetilde{\alpha Q}  \in \mathscr Q_{ex} $.
\item[\rm(iv)] If $1< l(Q) \le \alpha $, then $Q \in \mathscr Q_{ex} \setminus  \mathscr{Q} $.
\end{itemize}
\end{proposition}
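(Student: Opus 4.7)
The plan is to reduce each of the four claims to applications of Remark \ref{r2.5}(i), which identifies $Q'$ and $\widetilde{Q}$ as two consecutive $(\alpha,\beta)$-doubling cubes in the chain $\{\alpha^N Q\}_{N \in \mathbb{Z}}$, combined with the existence result Proposition \ref{p1.3} (whose applicability is guaranteed by the Basic Assumption $\beta > \alpha^d$). The central relation that does all the work is that for any cube $R$ with $Q' \subsetneq R \subseteq \widetilde{Q}$ in the chain, one has $R' = Q'$ and $\widetilde{R} = \widetilde{Q}$.

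For (i), I would first invoke Proposition \ref{p1.3}(i)--(ii) to produce $N_1 \in \mathbb{Z}_+$ and $N_2 \in \mathbb{N}$ with $Q' = \alpha^{-N_1}Q$ and $\widetilde{Q} = \alpha^{N_2}Q$. For the ``moreover'' clause, I would take any $R = \alpha^k Q$ with $-N_1 + 1 \le k \le N_2$ and note that $R$ lies strictly above $Q'$ and at or below $\widetilde{Q}$ in the chain; Remark \ref{r2.5}(i) then gives $R' = Q'$, and the hypothesis $Q \in \mathscr{Q}_{ex}$ (that is, $l(Q') \le 1$) yields $l(R') \le 1$, so $R \in \mathscr{Q}_{ex}$.

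Parts (ii)--(iv) are short corollaries of the same mechanism. For (ii), taking $R = \widetilde{Q}$ in the observation above gives $(\widetilde{Q})' = Q'$ and hence $\widetilde{Q} \in \mathscr{Q}_{ex}$. For (iii), given $Q \in \mathscr{Q}$, I would write $(\alpha Q)' = \alpha^{-N+1} Q$ for some $N \ge 1$ so that $l((\alpha Q)') \le l(Q) \le 1$; this places $\alpha Q \in \mathscr{Q}_{ex}$, and (ii) then yields $\widetilde{\alpha Q} \in \mathscr{Q}_{ex}$. For (iv), $l(Q) > 1$ immediately gives $Q \notin \mathscr{Q}$, while $Q' = \alpha^{-N} Q$ with $N \ge 1$ forces $l(Q') \le \alpha^{-1} l(Q) \le 1$, whence $Q \in \mathscr{Q}_{ex}$.

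The only subtle point --- hardly an obstacle --- is keeping track of the fact that $Q'$ is strictly smaller than $Q$ (i.e.\ $N_1 \ge 1$), which is precisely what makes the factor of $\alpha^{-1}$ available to absorb the dilation in (iii) and the borderline length in (iv). Beyond that, the argument is purely bookkeeping on the doubling structure of the cube chain, which is why the authors consider its verification routine enough to omit.
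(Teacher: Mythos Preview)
Your proposal is correct and matches the approach the paper implicitly has in mind: the authors omit the proof entirely, stating it ``is easy to check,'' and your argument supplies precisely the routine verification via the definitions of $Q'$, $\widetilde Q$ and Remark~\ref{r2.5}(i). The key observation you isolate---that $Q'$ is by definition \emph{strictly} contractive ($N_1\ge 1$), so one always gains at least one factor of $\alpha^{-1}$---is exactly what drives (iii) and (iv), and your chaining through Remark~\ref{r2.5}(i) for the ``moreover'' clause of (i) and for (ii) is the natural route.
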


\section{Equivalence of four $rbmo(\mu)$-type spaces \label{s3}}
  The definition of $rbmo^{1}_{\mathscr Q}(\mu)$ (Definition \ref{d1x.1}) is a non-doubling version of Definition \ref{d1.7}(iii), and in this section, we will further  introduce the other three non-doubling versions and prove  the equivalence of them with $rbmo^{1}_{\mathscr Q}(\mu)$, which are given by Theorems \ref{l2.10} and  \ref{l2.12}.

\begin{definition}\label{d2.2}
Let $\eta\in(1,\infty)$, $ \mathscr{Q}:=\left \{Q:l(Q)\le 1\right \}$ and $ \mathscr{Q}_{ex}:=\left \{Q:l(Q')\le 1\right \}$. Suppose that for a given function $f\in L_{loc}(\mu)$, we say that $f$ is in $rbmo^{2}_{\mathscr Q}(\mu)$ if there exist some constant $C_{4}\ge 0 $ and a collection of numbers $ \left \{ f_{Q} \right \}_{Q}$ (that is, for each cube $Q$ there exists $f_{Q}\in \mathbb R$) such that
\begin{equation}\label{eq2.4}
\sup_{Q\in \mathscr{Q}} \frac{1}{\mu(\eta Q)} \int _{Q}\left | f(x)-f_{Q}   \right | d\mu(x) \le C_{4},
\end{equation}
and that,  for any two  cubes $Q\subset R $ with $Q\in \mathscr{Q}$ and $R\in \mathscr{Q}_{ex} $,
\begin{equation}\label{eq2.5}
| f_{Q}-f_{R}   |  \le C_{4}K_{Q,R}  ,
\end{equation}
and that, for any cube  $R\in \mathscr{Q}_{ex} \setminus  \mathscr{Q}$,
\begin{equation}\label{eq2.6}
|f_{R}|\le C_{4} .
\end{equation}
Then, we write $ \left \| f  \right \| _{\ast2}= \inf\ {C_{4}}   $, where the infimum  is taken over all the constants $C_{4}$ and all the numbers $\left \{ f_{Q} \right \} $ satisfying (\ref{eq2.4}), (\ref{eq2.5}) and (\ref{eq2.6}).
\end{definition}
It is easily
checked that $\|f\|_{\ast1}$ and $\left \| f  \right \| _{\ast2} $ are the norms in the space of functions modulo constants.

The  $RBMO(\mu)$ space was introduced by Tolsa     and it was proved that the definition of $RBMO(\mu) $ is independent of the choice of $ \eta$  (see \cite[Lemmas 2.6 and  2.8]{4}). The space $rbmo(\mu)$  introduced by Dachun Yang  is also independent of the choice of $ \eta$  (see \cite[Propositions 2.3 and  2.4]{5}). Next, we will also verify  that the definition of $rbmo_{\mathscr Q}(\mu) $ is independent of the choice of $ \eta$ . Let us begin with two lemmas.

\begin{lemma}\label{l2.6}
Let $\rho>0$ and $ Q $ be a fixed cube in $ \mathbb R^{d} $. For any  cube $Q_{x}$ centered at $ x \in Q \cap \supp(\mu) $, if $ l(Q_{x}) \le \rho l(Q) $, then $Q_{x} \subset   (\rho+1) Q$.
\end{lemma}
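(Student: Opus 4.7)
The plan is to verify the inclusion coordinate-wise by a direct triangle inequality, with no measure theory playing any role (the hypothesis $x \in \supp(\mu)$ is harmless and unused in this geometric statement; only $x \in Q$ matters).

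First I would fix notation: write $x_Q$ for the center of $Q$, so that $Q$ is the set of points $y\in\mathbb R^d$ with $|y_i - (x_Q)_i| \le l(Q)/2$ for every coordinate $i \in \{1,\ldots,d\}$, and similarly $Q_x$ is the set of $y$ with $|y_i - x_i| \le l(Q_x)/2$. The target cube $(\rho+1)Q$ is, by the paper's notation, concentric with $Q$ and has side length $(\rho+1)l(Q)$, i.e.\ the set of $y$ with $|y_i - (x_Q)_i| \le (\rho+1)l(Q)/2$.

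The main step is then the following one-line estimate: for any $y \in Q_x$ and any coordinate $i$,
\begin{equation*}
|y_i - (x_Q)_i| \;\le\; |y_i - x_i| + |x_i - (x_Q)_i| \;\le\; \frac{l(Q_x)}{2} + \frac{l(Q)}{2} \;\le\; \frac{\rho\, l(Q)}{2} + \frac{l(Q)}{2} \;=\; \frac{(\rho+1)l(Q)}{2},
\end{equation*}
where the first inequality is the triangle inequality, the second uses $y\in Q_x$ together with $x\in Q$, and the third uses the hypothesis $l(Q_x)\le \rho l(Q)$. This immediately gives $y \in (\rho+1)Q$, and hence $Q_x \subset (\rho+1)Q$.

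There is no real obstacle here; the lemma is purely a geometric observation that will later be combined with the Besicovitch covering lemma to pass from small concentric cubes of points $x$ in $Q$ to a single controlled enlargement of $Q$, presumably in the proof that $\|\cdot\|_{*1}$ and $\|\cdot\|_{*2}$ do not depend on the choice of $\eta$.
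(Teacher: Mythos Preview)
Your proof is correct and follows essentially the same coordinate-wise triangle inequality argument as the paper's own proof; the only cosmetic difference is that the paper writes it out for $d=2$ with ``projection length'' notation and remarks that other $d$ are similar, whereas you handle general $d$ directly. Your closing remark about the lemma's later use is also accurate.
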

\begin{proof}
Let $d=2$,  the proof for other values of $ d $ is similar. Let $ l_{x}(ab) $ and  $ l_{y}(ab) $ be the  projection lengths  of segment  $ ab $ on the  $x$-axis and  $y$-axis, $x_{Q}$ be the center of $ Q $ and  $x$ be the center of $ Q_{x} $. Then we obtain
$ l_{x}(x x_{Q} ) \le \frac{1 }{2} l(Q)  $, $ l_{y}(x x_{Q}) \le \frac{1 }{2} l(Q) $.

For any $ x' \in Q_{x} $, it is easy to find that $$ l_{x}(x' x) \le \frac{1 }{2} l(Q_{x}) \le  \frac{\rho}{2} l(Q) , l_{y}(x' x) \le \frac{1 }{2} l(Q_{x}) \le  \frac{\rho}{2} l(Q) .$$
So  we have
$$  l_{x}(x'x_{Q}) \le l_{x}(x'x)+l_{x}(xx_{Q}) \le \frac{1 }{2} l(Q) + \frac{\rho}{2} l(Q)\le \frac{1}{2}l((\rho+1)Q), $$
$$  l_{y}(x'x_{Q}) \le l_{y}(x'x)+l_{x}(xx_{Q}) \le \frac{1 }{2} l(Q) + \frac{\rho}{2} l(Q)\le \frac{1}{2}l((\rho+1)Q), $$
which implies  $x' \in (\rho+1) Q$, i.e., $Q_{x} \subset (\rho +1)Q  $.
\end{proof}

\begin{lemma}\label{l2.8}
Let $ f\in rbmo^{2}_{\mathscr Q}(\mu)$ and  $\left \{ f_{Q} \right \}_{Q}$ be a collection of numbers satisfying (\ref{eq2.4}),(\ref{eq2.5}) and (\ref{eq2.6}).  If $Q$ and $R$ are cubes in  $\mathscr{Q}_{ex}$ such that $l(Q)\approx l(R) $, $\min(l(Q),l(R))\le 1$ and $x_{Q} \in R $ $($$x_{Q} $ is the center of the cube Q$)$, then
$$ |f_{Q}-f_{R}| \le C \|f\|_{\ast2}.$$
\end{lemma}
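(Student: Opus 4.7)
The approach is to bridge $f_Q$ and $f_R$ through a single auxiliary cube $S$ that contains $Q\cup R$, has side length comparable to $l(Q)\approx l(R)$, and lies in $\mathscr{Q}_{ex}$; the triangle inequality
$$|f_Q-f_R|\le|f_Q-f_S|+|f_S-f_R|$$
then reduces the estimate to two applications of (\ref{eq2.5}) together with, when necessary, (\ref{eq2.6}). By swapping the roles of $Q$ and $R$ if needed I may assume $l(Q)\le l(R)$; since $\min(l(Q),l(R))\le 1$ this forces $Q\in\mathscr{Q}$, and a coordinate-wise estimate in the spirit of Lemma~\ref{l2.6}, starting from $x_Q\in R$ and $l(Q)\le l(R)$, yields $Q\subset 2R$.

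\textbf{Case 1: $l(R)\le 1$.} Both $Q,R\in\mathscr{Q}$. I would construct $S\supset 2R$ with $l(S)\approx l(R)$ and $S\in\mathscr{Q}_{ex}$. When $l(S)\le\alpha$, the inclusion $S\in\mathscr{Q}_{ex}$ is immediate---either from $\mathscr{Q}\subset\mathscr{Q}_{ex}$ (if $l(S)\le 1$) or from Proposition~\ref{p2.4}(iv) (if $1<l(S)\le\alpha$); when $l(S)>\alpha$ I would instead choose $S$ to be a doubling cube supplied by Proposition~\ref{p1.3}(ii) and appeal to Remark~\ref{r2.5} together with $R\in\mathscr{Q}_{ex}$ to bound the contractive doubling ancestor $S'$ by the known cube $R'$, so that $l(S')\le l(R')\le 1$. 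With $S$ in hand, Proposition~\ref{p1.5}(ii) gives $K_{Q,S},K_{R,S}\le C$, and two applications of (\ref{eq2.5}) yield $|f_Q-f_S|,|f_R-f_S|\ls\|f\|_{\ast 2}$.

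\textbf{Case 2: $l(R)>1$.} Then $R\in\mathscr{Q}_{ex}\setminus\mathscr{Q}$. Here I would pick $S$ containing $2R$ with $l(S)\in(1,\alpha l(R)]$ such that $S\in\mathscr{Q}_{ex}\setminus\mathscr{Q}$; the bound $l(S')\le 1$ is inherited from $R\in\mathscr{Q}_{ex}$ via Remark~\ref{r2.5}. Then (\ref{eq2.6}) applied to $R$ and to $S$ yields $|f_R|,|f_S|\le\|f\|_{\ast 2}$, while (\ref{eq2.5}) applied to $Q\subset S$ yields $|f_Q-f_S|\le CK_{Q,S}\|f\|_{\ast 2}\ls\|f\|_{\ast 2}$, the coefficient being bounded again by Proposition~\ref{p1.5}(ii) since $l(Q)\approx l(S)$.

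The principal difficulty is the construction of $S\in\mathscr{Q}_{ex}$ with $l(S)$ comparable to $l(R)$. Proposition~\ref{p2.4}(iv) only handles the easy regime $l(S)\le\alpha$, which fails to cover the narrow window where $l(R)$ is close to $1$ and $\alpha$ is close to $1$. The rescue is Proposition~\ref{p1.3}(ii), which allows a doubling cube of a prescribed approximate size to be chosen; combined with Remark~\ref{r2.5} and the hypothesis $R\in\mathscr{Q}_{ex}$, this forces the contractive doubling ancestor $S'$ to satisfy $l(S')\le 1$. Once this delicate step is completed, the remainder of the argument is routine triangle-inequality bookkeeping with (\ref{eq2.5}) and (\ref{eq2.6}).
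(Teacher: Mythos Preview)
Your large-cube strategy has a genuine gap at the point where you claim $S\in\mathscr{Q}_{ex}$. Membership in $\mathscr{Q}_{ex}$ means $l(S')\le 1$, where $S'$ is the largest $(\alpha,\beta)$-doubling cube of the form $\alpha^{-N}S$; this depends on the behaviour of $\mu$ along the dilation chain of $S$, not that of $R$. Remark~\ref{r2.5} only compares cubes within a \emph{single} chain $\{\alpha^N P\}_{N\in\mathbb Z}$, so it transfers nothing from $R'$ to $S'$ when $S$ and $R$ have different centres. Even if you take $S=\alpha^k R$ concentric with $R$, the ancestor $S'$ is simply the largest doubling cube among $\alpha^{k-1}R,\alpha^{k-2}R,\ldots$, and nothing prevents some $\alpha^j R$ with $l(\alpha^j R)>1$ from being doubling, in which case $l(S')>1$ and $S\notin\mathscr{Q}_{ex}$. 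Choosing $S$ itself doubling via Proposition~\ref{p1.3}(ii) does not constrain $S'$ either. Your Case~2 has the additional defect that ``$S\supset 2R$ with $l(S)\in(1,\alpha l(R)]$'' is already impossible when $\alpha<2$.

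The paper sidesteps all of this by going in the opposite direction: it bridges through a \emph{small} cube rather than a large one. After reducing (as you do) to $l(Q)\le l(R)$, so that $Q\in\mathscr{Q}$, the hypothesis $x_Q\in R$ yields a cube $S\subset Q\cap R$ with $l(S)=\tfrac12 l(Q)$; then $S\in\mathscr{Q}$ automatically, and (\ref{eq2.5}) applies to each of the inclusions $S\subset Q$ and $S\subset R$ (with $S$ in the role of the small cube in $\mathscr{Q}$, and $Q,R\in\mathscr{Q}_{ex}$ by hypothesis). Proposition~\ref{p1.5}(ii) gives $K_{S,Q}+K_{S,R}\le C$, and the triangle inequality finishes the proof. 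No $\mathscr{Q}_{ex}$-membership needs to be verified for any auxiliary cube.
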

\begin{proof}
Let us prove Lemma \ref{l2.8} by two cases.

Case 1.  If $l(Q) \le l(R)$, then we have $l(Q) \le 1 $ and hence $Q \in \mathscr{Q} $. Because the center of $Q$ is in $R $, then there exists  a cube $S$ such that $l(S)=\frac{1}{2}l(Q)$ and $S\subset Q\cap R$. From this we can conclude that $l(S)\approx l(Q)\approx  l(R) $, $ S\subset Q$, $ S\subset R$ and $S \in \mathscr Q $. By (\ref{eq2.5}) and Proposition \ref{p1.5}(ii), we have
\begin{align*}
|f_{Q}-f_{R}|\le |f_{Q}-f_{S}|+|f_{S}-f_{R}|\le (K_{S,Q}+K_{S,R})\|f\|_{\ast2 }\le C\|f\|_{\ast2 }.
\end{align*}

Case 2. If $l(Q) > l(R)$, by $ x_{Q} \in R$, we can find that $ x_{R}\in Q$, $l(R) \le 1 $ and $R \in \mathscr{Q} $. By this and using a similar  estimate of (i), we further obtain $|f_{R}-f_{Q}|\le \|f\|_{\ast2}$.

\end{proof}
The following proposition indicates that $\|f\|_{\ast2 (\eta)}$ is independent of $\eta$, the proof is a slight variation of \cite[Lemma 2.6]{4} with $|f_{Q_{xi}}-f_{Q_{0}}|\le C\|f\|_{\ast\ast(\eta)}$ replaced by Lemma \ref{l2.8}. The details are omitted.
\begin{proposition}\label{p2.9}
The norms $ \left \| \cdot  \right \| _{\ast2,(\eta)}$ are independent of $\eta \in (1,\infty) $.
\end{proposition}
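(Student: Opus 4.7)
The easier direction is monotonicity. If $1<\eta_{1}<\eta_{2}$, then $\mu(\eta_{1}Q)\le \mu(\eta_{2}Q)$ for every cube $Q$, so any collection $\{f_{Q}\}_{Q}$ that witnesses (\ref{eq2.4}) at level $\eta_{1}$ with constant $C_{4}$ automatically witnesses it at level $\eta_{2}$ with the same $C_{4}$; conditions (\ref{eq2.5}) and (\ref{eq2.6}) involve no $\eta$. Taking the infimum over admissible collections gives $\|f\|_{\ast 2,(\eta_{2})}\le\|f\|_{\ast 2,(\eta_{1})}$, so all that remains is to bound $\|f\|_{\ast 2,(\eta_{1})}$ by a constant multiple of $\|f\|_{\ast 2,(\eta_{2})}$.

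For this reverse direction I would fix a collection $\{f_{Q}\}_{Q}$ that nearly realizes $\|f\|_{\ast 2,(\eta_{2})}$ and show that the \emph{same} collection verifies (\ref{eq2.4}) at level $\eta_{1}$ up to a dimensional constant. Fix $Q\in\mathscr Q$ and set $\rho:=(\eta_{1}-1)/\eta_{2}\in(0,1)$. For each $x\in Q\cap\supp(\mu)$ take the concentric cube $Q_{x}$ of side length $\rho\,l(Q)$; by Lemma \ref{l2.6} applied with the dilation factor $\eta_{2}\rho=\eta_{1}-1$, we have $\eta_{2}Q_{x}\subset\eta_{1}Q$. Applying the Besicovich covering lemma (Lemma \ref{ll2.1}) produces a subcollection $\{Q_{x_{i}}\}_{i}$ covering $Q\cap\supp(\mu)$ with overlap at most $C_{d}$. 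Because $l(Q_{x_{i}})=\rho\,l(Q)\le l(Q)\le 1$, every $Q_{x_{i}}$ lies in $\mathscr Q\subset\mathscr Q_{ex}$ (and so does $Q$), with $l(Q_{x_{i}})\approx l(Q)$ and center $x_{i}\in Q$.

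Then I would split
\begin{equation*}
\int_{Q}|f(y)-f_{Q}|\,d\mu(y)\le\sum_{i}\int_{Q_{x_{i}}}|f(y)-f_{Q_{x_{i}}}|\,d\mu(y)+\sum_{i}|f_{Q_{x_{i}}}-f_{Q}|\,\mu(Q_{x_{i}}).
\end{equation*}
The first sum is estimated by applying (\ref{eq2.4}) at level $\eta_{2}$ on each $Q_{x_{i}}\in\mathscr Q$, which yields $\|f\|_{\ast 2,(\eta_{2})}\,\mu(\eta_{2}Q_{x_{i}})$; summing and using the bounded overlap of $\{\eta_{2}Q_{x_{i}}\}_{i}$ together with $\eta_{2}Q_{x_{i}}\subset\eta_{1}Q$ bounds it by $C_{d}\|f\|_{\ast 2,(\eta_{2})}\mu(\eta_{1}Q)$. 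The second sum is where Lemma \ref{l2.8} enters: since $Q,Q_{x_{i}}\in\mathscr Q_{ex}$, $l(Q_{x_{i}})\approx l(Q)$, $\min(l(Q),l(Q_{x_{i}}))\le 1$, and $x_{Q_{x_{i}}}=x_{i}\in Q$, the lemma gives $|f_{Q_{x_{i}}}-f_{Q}|\lesssim\|f\|_{\ast 2,(\eta_{2})}$; summing with bounded overlap and $Q_{x_{i}}\subset\eta_{1}Q$ again yields $C\|f\|_{\ast 2,(\eta_{2})}\mu(\eta_{1}Q)$. Dividing by $\mu(\eta_{1}Q)$ produces the desired inequality, so the same collection $\{f_{Q}\}_{Q}$ witnesses $\|f\|_{\ast 2,(\eta_{1})}\lesssim\|f\|_{\ast 2,(\eta_{2})}$.

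The main obstacle is the geometric calibration of the parameters: the side length $\rho\,l(Q)$ must simultaneously be small enough that $\eta_{2}Q_{x_{i}}\subset\eta_{1}Q$ (controlled via Lemma \ref{l2.6}) and comparable to $l(Q)$ with $l(Q_{x_{i}})\le 1$ so that Lemma \ref{l2.8} is applicable and the $Q_{x_{i}}$ stay inside $\mathscr Q$. Once the choice $\rho=(\eta_{1}-1)/\eta_{2}$ is fixed, everything else reduces to the bounded-overlap bookkeeping standard in the $RBMO(\mu)$ theory.
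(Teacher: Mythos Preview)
Your proposal is correct and follows essentially the same approach the paper indicates: it is precisely the adaptation of Tolsa's Lemma~2.6 argument (Besicovich cover by cubes $Q_{x_i}$ of side $\frac{\eta_1-1}{\eta_2}\,l(Q)$ so that $\eta_2 Q_{x_i}\subset\eta_1 Q$, then split the oscillation integral) with the cross-cube comparison $|f_{Q_{x_i}}-f_{Q}|\le C\|f\|_{\ast2}$ supplied by Lemma~\ref{l2.8}, exactly as the paper prescribes. The only cosmetic point is that the bounded overlap you invoke for $\{\eta_2 Q_{x_i}\}_i$ is not literally the Besicovich constant $C_d$ but a constant depending also on $\eta_1,\eta_2$ (since the cubes all share the same side length, the dilates inherit bounded overlap); this is standard and does not affect the argument.
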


\begin{theorem}\label{l2.10}
Let $\alpha >1$ and $\eta \in (1,\alpha] $. Then $rbmo^{1}_{\mathscr Q}(\mu)=rbmo^{2}_{\mathscr Q}(\mu)$ with equivalent norms.
\end{theorem}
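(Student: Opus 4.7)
We establish the two inclusions separately.

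\emph{Easy direction} $\|\cdot\|_{\ast2}\lesssim\|\cdot\|_{\ast1}$. Given $f\in rbmo^{1}_{\mathscr Q}(\mu)$, take $f_{Q}:=m_{\widetilde Q}f$ for every cube $Q$. Then (\ref{eq2.4}) reproduces (\ref{eq2.1}). For (\ref{eq2.6}), observe that $R\in\mathscr Q_{ex}\setminus\mathscr Q$ forces $l(\widetilde R)\ge l(R)>1$, so $\widetilde R\in\mathscr Q_{ex}\setminus\mathscr Q$; applying (\ref{eq2.3}) to $\widetilde R$ together with the doubling bound $\mu(\eta\widetilde R)\le\mu(\alpha\widetilde R)\le\beta\mu(\widetilde R)$ (using $\eta\le\alpha$) gives $|f_{R}|=|m_{\widetilde R}f|\le\beta\|f\|_{\ast1}$. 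For (\ref{eq2.5}), compare $m_{\widetilde Q}f$ and $m_{\widetilde R}f$ by bridging through an auxiliary $(\alpha,\beta)$-doubling cube $S$ concentric with $Q$, with $R\subset S$ and $l(S)\approx l(R)$ (existence from Proposition~\ref{p1.3}(i) and Remark~\ref{r2.5}), applying (\ref{eq2.2}) on $\widetilde Q\subset S$ and controlling the $K$-coefficients $K_{\widetilde Q,S}$ and the leftover difference $|m_{S}f-m_{\widetilde R}f|$ against $K_{Q,R}$ via Proposition~\ref{p1.5}.

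\emph{Hard direction} $\|\cdot\|_{\ast1}\lesssim\|\cdot\|_{\ast2}$. Let $\{f_{Q}\}_{Q}$ realize $\|f\|_{\ast2}$. The workhorse is a computational lemma: for every $(\alpha,\beta)$-doubling $P\in\mathscr Q$,
$$|m_{P}f-f_{P}|\le\frac{1}{\mu(P)}\int_{P}|f-f_{P}|\,d\mu\le\frac{\mu(\eta P)}{\mu(P)}\|f\|_{\ast2}\le\beta\|f\|_{\ast2},$$
an immediate consequence of (\ref{eq2.4}) and $\eta\le\alpha$. We verify (\ref{eq2.3}), (\ref{eq2.1}), (\ref{eq2.2}) in this order. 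For (\ref{eq2.3}), fix $R\in\mathscr Q_{ex}\setminus\mathscr Q$ and apply the Besicovich covering lemma (Lemma~\ref{ll2.1}) to $R\cap\supp(\mu)$ with cubes centered at each point having side length $l(R')\le1$; this yields a bounded-overlap subfamily $\{P_{k}\}\subset\mathscr Q$ with $P_{k}\subset 2R$ by Lemma~\ref{l2.6}. Splitting
$$\int_{R}|f|\,d\mu\le\sum_{k}\int_{P_{k}}|f-f_{P_{k}}|\,d\mu+\sum_{k}\bigl(|f_{P_{k}}-f_{R}|+|f_{R}|\bigr)\mu(P_{k}),$$
we bound the first sum by (\ref{eq2.4}), the tail $|f_{R}|$ by (\ref{eq2.6}), and $|f_{P_{k}}-f_{R}|$ by inserting a concentric $(\alpha,\beta)$-doubling enlargement of $P_{k}$ large enough to contain $R$; the intervening $K$-coefficients are dominated via Proposition~\ref{p1.5}(iii), and the bounded-overlap property together with (\ref{eq1.1}) closes the estimate against $\mu(\eta R)$.

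With (\ref{eq2.3}) available, the computational lemma extends to doubling $P\in\mathscr Q_{ex}\setminus\mathscr Q$ via $|m_{P}f|\le\tfrac{\mu(\eta P)}{\mu(P)}\|f\|_{\ast1}\le\beta\|f\|_{\ast1}\lesssim\|f\|_{\ast2}$. Then (\ref{eq2.1}) follows from the triangle inequality
$$\frac{1}{\mu(\eta Q)}\int_{Q}|f-m_{\widetilde Q}f|\,d\mu\le\|f\|_{\ast2}+|f_{Q}-f_{\widetilde Q}|+|f_{\widetilde Q}-m_{\widetilde Q}f|,$$
where $|f_{Q}-f_{\widetilde Q}|$ is handled by (\ref{eq2.5}) together with Proposition~\ref{p1.5}(iii) (since no doubling cube lies strictly between $Q$ and $\widetilde Q$), and the last term is controlled either by the computational lemma (when $\widetilde Q\in\mathscr Q$) or by (\ref{eq2.6}) plus the extended bound (when $\widetilde Q\in\mathscr Q_{ex}\setminus\mathscr Q$). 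Finally (\ref{eq2.2}) comes from $|m_{Q}f-m_{R}f|\le|m_{Q}f-f_{Q}|+|f_{Q}-f_{R}|+|f_{R}-m_{R}f|$, with the outer terms absorbed into $\|f\|_{\ast2}$ by the computational lemma and its extension, and the middle term bounded by (\ref{eq2.5}). The main obstacle is the verification of (\ref{eq2.3}): the Besicovich cubes $P_{k}$ live in $2R$ rather than $R$, so the comparison $|f_{P_{k}}-f_{R}|$ is not immediately available from (\ref{eq2.5}), and one must construct a concentric $(\alpha,\beta)$-doubling chain from $P_{k}$ up to a cube comparable to $\widetilde R$, carefully bookkeeping the $K$-coefficients along the way.
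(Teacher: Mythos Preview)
Your overall architecture mirrors the paper's, but both directions share a recurring oversight: the auxiliary cubes you introduce are not guaranteed to have the membership or doubling properties that (\ref{eq2.2}), (\ref{eq2.5}) and the summation arguments actually require.

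\textbf{Easy direction.} Your choice $f_{Q}=m_{\widetilde Q}f$ is legitimate (the paper instead sets $f_{Q}:=0$ when $\widetilde Q\notin\mathscr Q$, which makes (\ref{eq2.6}) trivial, but your handling of (\ref{eq2.6}) is correct). The gap is your bridge $S$ for (\ref{eq2.5}). A doubling cube concentric with $Q$ with $R\subset S$ and $l(S)\approx l(R)$ need not exist: Proposition~\ref{p1.3}(i) only supplies doubling cubes of \emph{some} large size in the chain, not of a prescribed one. Even granting such an $S$, applying (\ref{eq2.2}) to ``$\widetilde Q\subset S$'' requires both $\widetilde Q\in\mathscr Q$ and $\widetilde Q\subset S$, and neither holds in general (e.g.\ when $l(\widetilde Q)>1$). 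The paper bridges \emph{downward} through $Q'$: since $Q'\subset Q\in\mathscr Q$ one has $Q'\in\mathscr Q$, and $Q'\subset\widetilde Q$, $Q'\subset Q\subset R\subset\widetilde R$ with $\widetilde Q,\widetilde R\in\mathscr Q_{ex}$ by Proposition~\ref{p2.4}(ii); hence (\ref{eq2.2}) applies to both pairs, and $K_{Q',\widetilde Q}\le C$, $K_{Q',\widetilde R}\le CK_{Q,R}$ by Proposition~\ref{p1.5}.

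\textbf{Hard direction.} The more serious gap is in (\ref{eq2.3}). Choosing Besicovich cubes $P_{k}$ of fixed side $l(R')$ places them in $\mathscr Q$ but does \emph{not} make them $(\alpha,\beta)$-doubling, so you cannot pass from $\sum_{k}\mu(\eta P_{k})$ to $\sum_{k}\mu(P_{k})$; bounded overlap controls only the latter, and the growth condition (\ref{eq1.1}) yields no comparison to $\mu(\eta R)$ (there is no lower bound on $\mu(\eta R)$). The paper's remedy is to take each covering cube $Q_{x}$ to be the \emph{largest doubling} cube of side $\alpha^{-k}(\eta-1)$ below a fixed threshold (Proposition~\ref{p1.3}(ii)); doubling then gives $\mu(\eta Q_{x_{i}})\le\beta\mu(Q_{x_{i}})$ and the sum closes against $\mu(\eta Q)$ via (\ref{eq2.10}). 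Secondly, your estimate of $|f_{P_{k}}-f_{R}|$ through a ``concentric doubling enlargement of $P_{k}$ containing $R$'' is problematic: that enlargement need not lie in $\mathscr Q_{ex}$, its $K$-coefficient over $P_{k}$ need not be bounded, and relating its $f$-value back to $f_{R}$ is unclear since $R\notin\mathscr Q$. The paper avoids $f_{R}$ altogether, comparing $f_{Q_{x_{i}}}$ instead to $f_{Q_{vx_{i}}}$, where $Q_{vx_{i}}$ is the cube concentric with $Q_{x_{i}}$ in the chain with side in $(1,\alpha]$; this lies in $\mathscr Q_{ex}\setminus\mathscr Q$ by Proposition~\ref{p2.4}(iv), so (\ref{eq2.6}) bounds $|f_{Q_{vx_{i}}}|$ directly, and $K_{Q_{x_{i}},Q_{vx_{i}}}\le C$ by Proposition~\ref{p1.5}(ii),(iii).
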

\begin{proof}
 To prove  $\|f\|_{\ast2} \le C \|f\|_{\ast1}$ for $f\in rbmo^{1}_{\mathscr Q}(\mu)$, we set

$$ f_{Q}:=\begin{cases} m_{\widetilde{Q} }f    &    \widetilde{Q} \in \mathscr{Q};\\ 0     &        \widetilde{Q} \in \mathscr{Q}_{ex} \setminus  \mathscr{Q}.\end{cases}$$
Let us prove this by three steps.

Step 1.  Prove
\begin{equation}\label{eq2x.1}
 \underset {Q \in \mathscr{Q}}{\sup}\frac{1}{\mu(\eta Q)} \int _{Q} |f(x)-f_{Q}|d\mu(x)\le C\|f\|_{\ast1}  .
\end{equation}

If $\widetilde{Q} \in \mathscr{Q}$, obviously,  (\ref{eq2x.1}) holds with $C=1$.

If $\widetilde{Q} \in \mathscr{Q}_{ex} \setminus  \mathscr{Q}$, then by $\eta \le \alpha$ and that the cube $\widetilde Q$ is $(\alpha,\beta)$-doubling, we have
\begin{align*}
\frac{1}{\mu(\eta Q)} \int _{Q} |f(x)-f_{Q}|d\mu(x) =& \frac{1}{\mu(\eta Q)} \int _{Q} |f(x)|d\mu(x)    \\
 \le & \frac{1}{\mu(\eta Q)} \int _{Q} |f(x)-m_{\widetilde{Q}}f|d\mu(x)+\frac{\mu(Q)}{\mu(\eta Q)} | m_{\widetilde{Q}}f | \\
 \le & \|f\|_{\ast1}+ \frac{\mu(\eta \widetilde{Q} )}{\mu (\widetilde{Q})}  \|f\|_{\ast1} \le \|f\|_{\ast1}+ \frac{\mu(\alpha \widetilde{Q} )}{\mu (\widetilde{Q})}  \|f\|_{\ast1} \le C\|f\|_{\ast1}  .
\end{align*}
which implies that (\ref{eq2x.1}) holds.

Step 2.  Prove
    \begin{equation}\label{eq2.7}
|f_{Q}-f_{R}| \le C K_{Q,R} \|f\|_{\ast1}
\end{equation}
for any two cubes $Q\subset R$ with $Q\in \mathscr Q$ and $R \in \mathscr Q_{ex}$. We prove this by six cases.

Case 1.  $ l(\widetilde{R}) \ge l(\widetilde{Q})>1 $. (\ref{eq2.7}) is obviously true.

Case 2.  $l(\widetilde{R})> 1 \ge l(\widetilde{Q}) $.  By (\ref{eq2.2}), (\ref{eq2.3}), $\eta\le \alpha$, Remark \ref{r2.5}(i), (i) and (iii) of Proposition \ref{p1.5} and that cube $\widetilde R$ is $(\alpha,\beta)$-doubling, we obtain
\begin{align*}
|f_{Q}-f_{R}| =& |f_{Q}| = |m_{\widetilde Q}f| \\
\le &  |m_{\widetilde Q}f - m_{Q'}f |+|m_{Q'}f - m_{\widetilde R}f |+| m_{\widetilde R}f|\\
\le & K_{Q',\widetilde Q}\|f\|_{\ast1} + K_{Q',\widetilde R}\|f\|_{\ast1} + \frac{\mu(\eta \widetilde{R} )}{\mu (\widetilde{R})}  \|f\|_{\ast1} \\
\le & C\|f\|_{\ast1}+C(K_{Q',Q}+K_{Q, R}+K_{R,\widetilde R})\|f\|_{\ast1}+C \|f\|_{\ast1} \le C K_{Q, R} \|f\|_{\ast1},
\end{align*}
which implies that (\ref{eq2.7}) holds.

Case 3.   $ l(\widetilde{Q}) \le l(\widetilde{R}) \le 1 $.  By (\ref{eq2.2}), Remark \ref{r2.5}(i), (i)  and (iii) of Proposition \ref{p1.5}, we have
\begin{align*}
|f_{Q}-f_{R}| =&   |m_{\widetilde Q}f - m_{\widetilde R}f | \\
\le &  |m_{\widetilde Q}f - m_{Q'}f |+|m_{Q'}f - m_{\widetilde R}f |\\
\le &  K_{Q',\widetilde Q}\|f\|_{\ast1}+K_{Q',\widetilde R}\|f\|_{\ast1}\\
\le & C\|f\|_{\ast1}+C(K_{Q',Q}+K_{Q, R}+K_{R,\widetilde R})\|f\|_{\ast1}\\
\le & C\|f\|_{\ast1}+CK_{Q, R}\|f\|_{\ast1} \le C K_{Q, R} \|f\|_{\ast1} ,
\end{align*}
which implies that (\ref{eq2.7}) holds.

Case 4. $ 1 \le l(\widetilde{R}) \le l(\widetilde{Q}) $. (\ref{eq2.7}) is obviously true.

Case 5. $ l(\widetilde{R}) \le 1 < l(\widetilde{Q})   $. This case is similar to Case 2, and its proof is omitted.

Case 6. $ l(\widetilde{R}) \le l(\widetilde{Q}) \le 1 $.  This case is similar to Case 3, and its proof is omitted.

Thus (\ref{eq2.7}) holds in all cases.

Step 3.  Prove  $|f_{R}| \le C\|f\|_{\ast1}$ for any cube  $R\in \mathscr{Q}_{ex} \setminus  \mathscr{Q}$.

It is obviously true since $|f_{R}|=0\le C\|f\|_{\ast1}$ with $R\in \mathscr{Q}_{ex} \setminus  \mathscr{Q}$.

Combining Steps 1-3, we finally obtain that $  \|f\|_{\ast2}  \le C\|f\|_{\ast1}$.

Let us now prove $  \|f\|_{\ast1 }  \le C\|f\|_{\ast2}$ by three steps.

Step I. Prove
  \begin{equation}\label{eq2.8}
 \frac{1}{\mu(\eta Q)} \int _{Q} |f(x)|d\mu(x)\le C\|f\|_{\ast2}
 \end{equation}
 for any cube $ Q\in \mathscr{Q}_{ex} \setminus  \mathscr{Q}  $ by two cases.

Case 1.  $ 1<  \eta <2$. For $\mu$-almost any $x\in Q$, by Proposition \ref{p1.3}(ii), there exists some $(\alpha ,\beta  ) $-doubling cube $Q_{x}$ centered at $x$ with side length $ \alpha ^{-k} (\eta-1)  $, where $k$ is chosen to be the smallest non-negative integer, i.e., $Q_{x}$ is the biggest $(\alpha ,\beta ) $-doubling cube with $l(Q_{x})\le \eta-1$.  From the choice of $Q_{x}$, it's not hard to see that $ l(Q_{x})\le \eta -1 < l(\widetilde{\alpha Q_{x}} )$, then  $l(\alpha^{k}Q_{x}) = \eta -1< l(\alpha^{k+1}Q_{x}) $.

Let $n:=\log_{\alpha }{\frac{1}{\eta-1} } $. By $ 1<  \eta <2$, we have $n>0$. We  find that
\begin{equation*}
\begin{aligned}
&\alpha^{n} l(\alpha^{k}Q_{x}) \le \alpha^{n} (\eta -1) < \alpha^{n} l(\alpha^{k+1}Q_{x}) \Longleftrightarrow   l( \alpha^{n+k}Q_{x}) \le 1 <  l( \alpha^{n+k+1}Q_{x}).\\
\end{aligned}
\end{equation*}
Then the length of the cube $\alpha^{\left \lceil n \right \rceil +k}Q_{x} $ or $\alpha^{\left \lceil n \right \rceil +k+1}Q_{x}$ is in the range of $(1,\alpha]$ and we denote it by $Q_{vx}$. And by the definition of $\mathscr{Q}$ and $\mathscr{Q}_{ex} $, we have
\begin{equation}\label{eq2x.2}
Q_{x}\in \mathscr{Q} , Q_{vx} \in \mathscr{Q}_{ex} \setminus  \mathscr{Q} ,  Q_{x}\subset \alpha^{k}Q_{x} \subset Q_{vx}.
\end{equation}

Notice that $l(Q_{x})\le \eta-1 \le (\eta-1)l(Q)$, then by Lemma \ref{l2.6}, we obtain
\begin{equation}\label{ex3.11}
Q_{x}\subset \eta Q.
\end{equation}

By the Besicovich covering lemma (Lemma \ref{ll2.1}), we can find a family of points $ \left\{ x_{i} \right\}_{i} \subset Q \cap \supp (\mu) $ such that $Q \cap \supp (\mu)$ is covered by a family of cubes $\left \{Q_{x_{i}} \right  \}_{i} $ with bounded overlap. From this, (\ref{eq2.5}), (\ref{eq2.6}), (\ref{eq2x.2}) and Proposition \ref{p1.5}(i), it follows that
\begin{align*}
|f_{Q_{x_{i}}}| \le & |f_{Q_{x_{i}}}-f_{Q_{vx_{i}}}|+|f_{Q_{vx_{i}}}|  \le  K_{Q_{x_{i}},Q_{vx_{i}}} \|f\|_{\ast2}+C\|f\|_{\ast2}\\
\le &  K_{Q_{x_{i}}, \alpha^{k} Q_{x_{i}}}\|f\|_{\ast2}+ K_{\alpha^{k} Q_{x_{i}},Q_{vx_{i}}} \|f\|_{\ast2} +C\|f\|_{\ast2}  .
\end{align*}

The first item on the right hand side is bounded by $ C \|f\|_{\ast2}$ due to the fact that there are no doubling cubes of the form $ \alpha^{m} Q_{x_{i}}$  between $ Q_{x_{i}}  $ and $ \alpha^{k} Q_{x_{i}} $, and hence $ K_{Q_{x_{i}}, \alpha^{k} Q_{x_{i}}} \le C $. The second item is also bounded by $ C \|f\|_{\ast2}$,  because $ \alpha^{k} Q_{x_{i}} $ and $  Q_{vx_{i}} $ have comparable sizes. So,
\begin{equation}\label{eq2.9}
\begin{aligned}
 |f_{Q_{x_{i}}}| \le C \|f\|_{\ast2} .
\end{aligned}
\end{equation}
Since $Q_{x_{i}}$ is $(\alpha ,\beta  ) $-doubling, $ \eta \le \alpha$ and (\ref{ex3.11}), we also have
\begin{equation}\label{eq2.10}
\sum_{i } \mu(\eta Q_{x_{i}})\le \sum_{i } \mu(\alpha Q_{x_{i}}) \le \beta \sum_{i } \mu( Q_{x_{i}}) \le C \mu (\eta Q).
\end{equation}
From (\ref{eq2.9}) and (\ref{eq2.10}), we deduce that
\begin{equation}\label{eq2x.5}
\begin{aligned}
\frac{1}{\mu(\eta Q)} \int _{Q} |f(x)|d\mu(x)  \le & \frac{1}{\mu(\eta Q)} \sum_{i} \int _{Q_{x_{i}}} |f(x)|d\mu(x)\\
\le & \frac{1}{\mu(\eta Q)} \sum_{i} \left [ \int _{Q_{x_{i}}} |f(x)-f_{Q_{x_{i}}}|d\mu(x)+\mu(Q_{x_{i}}) |f_{Q_{x_{i}}}| \right ]  \\
\le &  \frac{1}{\mu(\eta Q)}  \left [\sum_{i} \mu(\eta Q_{x_{i}})\|f\|_{\ast2}+\sum_{i} \mu(Q_{x_{i}}) \|f\|_{\ast2} \right ] \\
\le &  \frac{1}{\mu(\eta Q)} \left [C \mu(\eta Q)\|f\|_{\ast2}+C \mu(\eta Q) \|f\|_{\ast2} \right ] \le C \|f\|_{\ast2},
\end{aligned}
\end{equation}
which implies (\ref{eq2.8}) holds.

Case 2. $\eta \ge 2$. For $\mu$-almost any $x\in Q \in \mathscr{Q}_{ex} \setminus  \mathscr{Q} $, by Proposition \ref{p1.3}(ii), there exists some $(\alpha ,\beta) $-doubling cube $Q_{x}$ centered at $x$ with side length $ \alpha ^{-k} $, where $k $ is chosen to be the smallest non-negative integer. From the choice of $Q_{x}$, it's not hard to see that $ l(Q_{x})\le 1 < l(\widetilde{\alpha Q_{x}} )$, and $(\widetilde{\alpha Q_{x}})'=Q_{x} $. Then,
\begin{equation}\label{eq2x.6}
Q_{x}\in \mathscr{Q}, \ \widetilde{\alpha Q_{x}}\in \mathscr{Q}_{ex} \setminus  \mathscr{Q}, \ Q_{x}\subset \widetilde{\alpha Q_{x}} \ \mathrm{and} \ Q_{x} \subset  2Q \subset \eta Q.
\end{equation}
 By the Besicovich covering lemma (Lemma \ref{ll2.1}), we can find a family of points $ \left\{ x_{i} \right\}_{i} \subset Q \cap \supp (\mu) $ such that $Q \cap \supp (\mu)$ is covered by a family of cubes $\left \{Q_{x_{i}} \right  \}_{i} $ with bounded overlap. From this, (\ref{eq2x.6}) and  Proposition \ref{p1.5}, it follows that
\begin{align*}
|f_{Q_{x_{i}}}| \le & |f_{Q_{x_{i}}}-f_{\widetilde{\alpha Q_{x_{i}}}}|+|f_{\widetilde{\alpha Q_{x_{i}}}}|
 \le  K_{Q_{x_{i}},\widetilde{\alpha Q_{x_{i}}}} \|f\|_{\ast2}+C\|f\|_{\ast2}  \\
 \le & C (K_{Q_{x_{i}},\alpha Q_{x_{i}}}+K_{\alpha Q_{x_{i}},\widetilde{\alpha Q_{x_{i}}}}) \|f\|_{\ast2}+C\|f\|_{\ast2} \le C\|f\|_{\ast2}  .
\end{align*}
By this and repeating the estimate of (\ref{eq2x.5}), we obtain
$$  \frac{1}{\mu(\eta Q)} \int _{Q} |f(x)|d\mu(x) \le C\|f\|_{\ast2}.  $$

Step II. Prove
$$   \frac{1}{\mu(\eta Q)}\int _{Q} |f(x)-m_{\widetilde{Q} }f|d\mu(x)  \le C\|f\|_{\ast2} \qquad  \mathrm{ for \ any \ cube \ } Q\in \mathscr{Q}.$$

If $Q$ is a $(\alpha ,\beta  ) $-doubling cube with $ Q \in \mathscr{Q} $, by (\ref{eq2.4}) and $ \eta \le \alpha$, we have
\begin{equation}\label{eq2.11}
|f_{Q}-m_{Q}f|\le \frac {1}{\mu(Q)} \int _{Q}|f(x)-f_{Q}|d\mu(x) \le \|f\|_{\ast2} \frac {\mu(\eta Q)}{\mu(Q)}\le C \|f\|_{\ast2}.
\end{equation}
Thus for any cube  $Q \in \mathscr{Q}$, if $\widetilde{Q} \in \mathscr{Q} $, by Proposition \ref{p1.5}(iii), (\ref{eq2.5}) and (\ref{eq2.11}), we obtain
\begin{align*}
|f_{ Q}-m_{\widetilde Q}f| \le & |f_{Q}-f_{\widetilde Q}|+|f_{\widetilde Q}-m_{\widetilde Q}f| \\
\le & K_{Q,\widetilde Q}\|f\|_{\ast2}+C\|f\|_{\ast2} \le C\|f\|_{\ast2},
\end{align*}
or if $\widetilde{Q} \in \mathscr{Q}_{ex} \setminus  \mathscr{Q}$, by  (\ref{eq2.5}), (\ref{eq2.6}), (\ref{eq2.8}) and $\eta\le \alpha$, we have \begin{align*}
|f_{ Q}-m_{\widetilde Q}f| \le & |f_{Q}-f_{\widetilde Q}|+|f_{\widetilde Q}-m_{\widetilde Q}f| \\
\le & K_{Q,\widetilde Q}\|f\|_{\ast2}+|f_{\widetilde Q}|+|m_{\widetilde Q}f|\\
\le &  C\|f\|_{\ast2}+C\|f\|_{\ast2}+\frac{\mu(\eta \widetilde Q)}{\mu(\widetilde Q)}\|f\|_{\ast2}\le C\|f\|_{\ast2}.
\end{align*}

Therefore, if $Q\in \mathscr{Q}$, we obtain
\begin{align*}
\frac{1}{\mu(\eta Q)}\int _{Q}|f(x)-m_{\widetilde Q}f|d\mu(x) \le & \frac{1}{\mu(\eta Q)}\int _{Q}(|f(x)-f_{Q}|+|f_{Q}-m_{\widetilde Q}f|)d\mu(x)\\
 \le & C\|f\|_{\ast2}+C \frac {\mu ( Q)}{\mu(\eta Q)} \|f\|_{\ast2}  \le C\|f\|_{\ast2}  .
\end{align*}

Step III.  Prove  $$ | m_{Q}f-m_{R}f   |  \le CK_{Q,R} \|f\|_{\ast2} $$
for any two $(\alpha ,\beta ) $-doubling cubes $Q\subset R $ with $Q\in \mathscr{Q}$ and $R\in \mathscr{Q}_{ex} $.

    If $R \in \mathscr{Q}$, by (\ref{eq2.11}) and (\ref{eq2.5}), we have
    \begin{align*}
|m_{Q}f-m_{R}f| \le & |m_{Q}f-f_{Q}|+|f_{Q}-f_{R}|+|f_{R}-m_{R}f| \\
 \le & C \|f\|_{\ast2}+CK_{Q,R}\|f\|_{\ast2}+C\|f\|_{\ast2}  \le CK_{Q,R}\|f\|_{\ast2}  .
\end{align*}
 If $ R\in \mathscr{Q}_{ex} \setminus  \mathscr{Q}$, by (\ref{eq2.11}), (\ref{eq2.5}), (\ref{eq2.6}), (\ref{eq2.8}) and $\eta \le \alpha$, we obtain
 \begin{align*}
|m_{Q}f-m_{R}f| \le & |m_{Q}f-f_{Q}|+|f_{Q}-f_{R}|+|f_{R}-m_{R}f| \\
 \le & C \|f\|_{\ast2}+CK_{Q,R}\|f\|_{\ast2}+|f_{R}|+|m_{R}f| \\
 \le & C\|f\|_{\ast2}+CK_{Q,R}\|f\|_{\ast2}+\|f\|_{\ast2}+ C\frac{\mu(\eta R)}{\mu(R)} \|f\|_{\ast2}  \le CK_{Q,R}\|f\|_{\ast2}  .
\end{align*}

Combining Steps I$-$III, we finally obtain that $  \|f\|_{\ast1}  \le C\|f\|_{\ast2}$.

Therefore we finish the proof of Theorem \ref{l2.10}.
\end{proof}

In the following definition, we will further introduce two $rbmo^{1}_{\mathscr Q}(\mu)$-type spaces  related with general cubes and ($\alpha,\beta$)-doubling cubes, respectively.

\begin{definition}\label{d2.11}
Let $ \eta \in (1,\infty)$ and $ \alpha \in [\eta,\infty)$. $ \mathscr{Q}:=\left \{Q:l(Q)\le 1\right \}$ and $ \mathscr{Q}_{ex}:=\left \{Q:l(Q')\le 1\right \}$.
\begin{itemize}
\item[\rm(i)] We say that $f\in L_{loc}(\mu)$ is in $rbmo^{3}_{\mathscr Q}(\mu) $ if there exists some constant $C_{5}\ge 0 $ such that, for any cube   $Q\in \mathscr{Q} $,
\begin{equation}\label{eq2.12}
\frac{1}{\mu(\eta Q)}\int _{Q}\left | f(x)-m_{Q}f  \right | d\mu(x) \le C_{5},
\end{equation}
for any two cubes $Q \subset R$ with $Q\in \mathscr{Q}$ and $R\in \mathscr{Q}_{ex}$,
\begin{equation}\label{eq2.13}
|m_{Q}f-m_{R}f| \le C_{5} K_{Q,R} \left[\frac{\mu(\eta Q)}{\mu(Q)}+\frac{\mu(\eta R)}{\mu(R)}\right],
\end{equation}
and for any cube   $Q\in \mathscr{Q}_{ex} \setminus  \mathscr{Q}$,
\begin{equation}\label{eq2.14}
\frac{1}{\mu(\eta Q)} \int _{Q}\left | f(x) \right | d\mu(x) \le C_{5} .
\end{equation}
Moreover, we define the $rbmo^{3}_{\mathscr Q}(\mu)$ norm of $f$ by the minimal constant $C_{5}$, and write it by $\|f\|_{\ast3}$.
\item[\rm(ii)] We say that $f\in L_{loc}(\mu)$ is in $rbmo^{4}_{\mathscr Q}(\mu) $ if there exists some constant $C_{6}\ge 0 $ such that, for any $(\alpha ,\beta) $-doubling cube  $Q\in \mathscr{Q} $,
\begin{equation}\label{eq2.15}
\frac{1}{\mu( Q)}\int _{Q}\left | f(x)-m_{Q}f  \right | d\mu(x) \le C_{6},
\end{equation}
for any two $(\alpha ,\beta) $-doubling cubes $Q \subset R$ with $Q\in \mathscr{Q}$ and $R\in \mathscr{Q}_{ex}$,
\begin{equation}\label{eq2.16}
|m_{Q}f-m_{R}f| \le C_{6} K_{Q,R} ,
\end{equation}
and for any $(\alpha ,\beta) $-doubling cube  $Q\in \mathscr{Q}_{ex} \setminus  \mathscr{Q}$,
\begin{equation}\label{eq2.17}
\frac{1}{\mu( Q)}\int _{Q}\left | f(x) \right | d\mu(x) \le C_{6} .
\end{equation}
Moreover, we define the $rbmo^{4}_{\mathscr Q}(\mu)$ norm of $f$ by the minimal constant $C_{6}$, and write it by $\|f\|_{\ast4}$.
\end{itemize}
\end{definition}
\begin{theorem}\label{l2.12}
Let $\eta \in (1,\infty) $ and $\alpha \in[\eta,\infty)$. Then $ rbmo^{1}_{\mathscr Q}(\mu)=rbmo^{3}_{\mathscr Q}(\mu)=rbmo^{4}_{\mathscr Q}(\mu)$ with equivalent norms.
\end{theorem}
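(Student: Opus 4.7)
My plan is to establish the cyclic chain
\[
rbmo^{1}_{\mathscr Q}(\mu) \;\subseteq\; rbmo^{3}_{\mathscr Q}(\mu) \;\subseteq\; rbmo^{4}_{\mathscr Q}(\mu) \;\subseteq\; rbmo^{1}_{\mathscr Q}(\mu),
\]
each with a quantitative norm bound, thereby closing the three-way equality. The two easy inclusions are separated from the substantial one.

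For $\|f\|_{\ast 3}\le C\|f\|_{\ast 1}$, the key observation is that for any $Q\in\mathscr Q$,
\[
|m_Q f-m_{\widetilde Q}f|\le\frac{1}{\mu(Q)}\int_Q |f-m_{\widetilde Q}f|\,d\mu\le\frac{\mu(\eta Q)}{\mu(Q)}\|f\|_{\ast 1}.
\]
Combined with the triangle inequality, this yields (\ref{eq2.12}) directly. For (\ref{eq2.13}) I would decompose $m_Q f-m_R f$ through $m_{\widetilde Q}f$ and $m_{\widetilde R}f$: the outer differences are absorbed into the $\mu(\eta\cdot)/\mu(\cdot)$ factors on the right of (\ref{eq2.13}) by the displayed inequality, and the middle term $|m_{\widetilde Q}f-m_{\widetilde R}f|$ is bounded by $CK_{\widetilde Q,\widetilde R}\|f\|_{\ast 1}\le CK_{Q,R}\|f\|_{\ast 1}$ using (\ref{eq2.2}), Proposition \ref{p2.4}(ii) (together with an analogous check via Remark \ref{r2.5}(i) that $\widetilde Q\in\mathscr Q_{ex}$ whenever $Q\in\mathscr Q$), and Proposition \ref{p1.5}(i), (iii). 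Condition (\ref{eq2.14}) is identical to (\ref{eq2.3}).

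The inclusion $\|f\|_{\ast 4}\le C\|f\|_{\ast 3}$ is almost tautological: for any $(\alpha,\beta)$-doubling cube $Q$ with $\eta\le\alpha$, the ratio $\mu(\eta Q)/\mu(Q)\le\beta$, so each defining inequality of Definition \ref{d2.11}(i) immediately yields its Definition \ref{d2.11}(ii) analogue up to a constant depending on $\beta$.

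The heart of the proof is $\|f\|_{\ast 1}\le C\|f\|_{\ast 4}$, which I would treat by first establishing (\ref{eq2.3}) on $Q\in\mathscr Q_{ex}\setminus\mathscr Q$ by replaying Step I of the proof of Theorem \ref{l2.10}: for $\mu$-a.e. $x\in Q$, Proposition \ref{p1.3}(ii) supplies an $(\alpha,\beta)$-doubling cube $Q_x\in\mathscr Q$ centered at $x$ at a suitable small scale (handling $\eta<2$ and $\eta\ge 2$ in separate subcases exactly as in the paper), together with a slightly larger $(\alpha,\beta)$-doubling cube $Q_{vx}\in\mathscr Q_{ex}\setminus\mathscr Q$ at scale $\approx 1$ with $K_{Q_x,Q_{vx}}\le C$ via Proposition \ref{p1.5}(iii). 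Then (\ref{eq2.17}) gives $|m_{Q_{vx}}f|\le\|f\|_{\ast 4}$ and (\ref{eq2.16}) gives $|m_{Q_x}f-m_{Q_{vx}}f|\le C\|f\|_{\ast 4}$, so $|m_{Q_x}f|\le C\|f\|_{\ast 4}$ uniformly. A Besicovich covering (Lemma \ref{ll2.1}) then completes (\ref{eq2.3}) by the arithmetic of (\ref{eq2x.5}). Condition (\ref{eq2.2}) is literally (\ref{eq2.16}); (\ref{eq2.1}) follows after splitting on whether $\widetilde Q\in\mathscr Q$ (use (\ref{eq2.15}) for $\widetilde Q$ after a small-doubling-cube Besicovich cover of $Q$ inside $\eta Q$) or $\widetilde Q\in\mathscr Q_{ex}\setminus\mathscr Q$ (use $|f-m_{\widetilde Q}f|\le|f|+|m_{\widetilde Q}f|$ with (\ref{eq2.17}) for $\widetilde Q$ and the just-established (\ref{eq2.3})). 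The principal obstacle will be the construction of $Q_{vx}$ as a \emph{doubling} cube in $\mathscr Q_{ex}\setminus\mathscr Q$ --- unlike in the $rbmo^{2}_{\mathscr Q}(\mu)$ argument, where any cube of side length in $(1,\alpha]$ sufficed --- which forces a more careful use of Proposition \ref{p1.3}(i) and Remark \ref{r2.5}(i) to simultaneously achieve doubling, size $>1$, and a bounded $K$-coefficient.
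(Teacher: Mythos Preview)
Your cyclic strategy matches the paper's, and the $\ast 3 \to \ast 4$ and $\ast 4 \to \ast 1$ directions are essentially right. For the latter, the ``principal obstacle'' you flag dissolves once you replace the auxiliary cube $Q_{vx}$ by $\widetilde{Q_{vx}}$, which is automatically doubling and lies in $\mathscr Q_{ex}\setminus\mathscr Q$ by Proposition~\ref{p2.4}(ii); the paper does exactly this in its Step~A. Your case split on $\widetilde Q$ for (\ref{eq2.1}) is also unnecessary, since (\ref{eq2.16}) applies uniformly once one has small doubling cubes $Q_x\subset\eta Q$ with $K_{Q_x,\widetilde Q}\le C$.

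There is, however, a genuine gap in your treatment of (\ref{eq2.13}). You propose to bound the middle term by $CK_{\widetilde Q,\widetilde R}\|f\|_{\ast 1}$ via (\ref{eq2.2}), but this fails on two counts. First, $\widetilde Q$ and $\widetilde R$ are centered at $x_Q$ and $x_R$ respectively, and since $Q,R$ need not be concentric there is no reason that $\widetilde Q\subset\widetilde R$; indeed $l(\widetilde Q)$ can greatly exceed $l(\widetilde R)$, so $K_{\widetilde Q,\widetilde R}$ is not even defined. Second, (\ref{eq2.2}) requires the \emph{smaller} cube to lie in $\mathscr Q$, whereas $\widetilde Q$ may well have $l(\widetilde Q)>1$; the membership $\widetilde Q\in\mathscr Q_{ex}$ that you verify is the hypothesis for the larger cube, not the smaller one. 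The paper repairs this by routing through $Q'$: since $Q'\subset Q\subset R\subset\widetilde R$ and $Q'\subset\widetilde Q$ (these three being concentric with $Q$), and since $l(Q')\le l(Q)\le 1$ forces $Q'\in\mathscr Q$, both $|m_{Q'}f-m_{\widetilde Q}f|$ and $|m_{Q'}f-m_{\widetilde R}f|$ are controlled by (\ref{eq2.2}), with $K_{Q',\widetilde Q}\le C$ and $K_{Q',\widetilde R}\le C(K_{Q',Q}+K_{Q,R}+K_{R,\widetilde R})\le CK_{Q,R}$ via Proposition~\ref{p1.5}. A smaller related gap: your outer estimate $|m_R f-m_{\widetilde R}f|\le\frac{\mu(\eta R)}{\mu(R)}\|f\|_{\ast 1}$ invokes (\ref{eq2.1}), which is only available for $R\in\mathscr Q$; when $R\in\mathscr Q_{ex}\setminus\mathscr Q$ the paper instead uses $|m_R f-m_{\widetilde R}f|\le|m_R f|+|m_{\widetilde R}f|$ together with (\ref{eq2.3}).
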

\begin{proof}
Let us first prove that $rbmo^{1}_{\mathscr Q}(\mu)\subset rbmo^{3}_{\mathscr Q}(\mu)$, i.e., $\|f\|_{\ast3}\le \|f\|_{\ast1}$ for any $f \in rbmo^{1} _{\mathscr Q}(\mu)$ by three steps.

Step 1.  Prove
\begin{equation}\label{eq2x.3}
\frac{1}{\mu(\eta Q)}\int _{Q}\left | f(x)-m_{Q}f  \right | d\mu(x) \le C\|f\|_{\ast1}
\end{equation}
for any cube $Q\in \mathscr{Q} $.

By (\ref{eq2.1}), we obtain
 \begin{align*}
 \frac{1}{\mu(\eta Q)}\int_{Q}|f(x)-m_{Q}f|d\mu(x) \le & \frac{1}{\mu(\eta Q)}\int_{Q}|f(x)-m_{\widetilde Q}f|d\mu(x)+\frac{\mu(Q)}{\mu(\eta Q)}|m_{Q}f-m_{\widetilde Q}f|\\
 \le & \|f\|_{\ast1}+\frac{\mu(Q)}{\mu(\eta Q)} \frac{1}{\mu(Q)}\int_{Q}|f(x)-m_{\widetilde Q}f|d\mu(x)  \le  2\|f\|_{\ast1},
 \end{align*}
  then (\ref{eq2x.3}) holds with $C=2$.

  Step 2. Prove $$|m_{Q}f-m_{R}f| \le C K_{Q,R} \left[\frac{\mu(\eta Q)}{\mu(Q)}+\frac{\mu(\eta R)}{\mu(R)}\right]\|f\|_{\ast1}$$
  for any two cubes $Q \subset R$ with $Q\in \mathscr{Q}$ and $R\in \mathscr{Q}_{ex}$.

  It is easy to deduce that
$$    |m_{Q}f-m_{R}f| \le |m_{Q}f-m_{\widetilde Q}f|+|m_{\widetilde Q}f-m_{\widetilde R}f|+|m_{R}f-m_{\widetilde R}f|=:\mathrm{I+II+III} .  $$

By (\ref{eq2.1}), we have
\begin{equation}\label{eq2.18}
\mathrm{I}=|m_{Q}f-m_{\widetilde Q}f| \le \frac {1}{\mu(Q)}\int_{Q}|f(x)-m_{\widetilde Q}f|d\mu(x) \le \frac{\mu(\eta Q)}{\mu(Q)}\|f\|_{\ast1 } .
\end{equation}

By Proposition \ref{p1.5}(iii), we obtain $K_{Q',\widetilde Q}\le C$, $K_{Q',Q}\le C$ and $K_{R,\widetilde R}\le C$. From this and (\ref{eq2.2}), it follows that
 \begin{align*}
\mathrm{II}= &|m_{\widetilde Q}f-m_{\widetilde R}f| \le  |m_{Q'}f-m_{\widetilde Q}f|+|m_{Q'}f-m_{\widetilde R}f| \\
 \le & K_{Q',\widetilde Q}\|f\|_{\ast1}+K_{Q',\widetilde R}\|f\|_{\ast1}
 \le  C \|f\|_{\ast1} +C(K_{Q',Q}+K_{Q,R }+K_{R,\widetilde R})\|f\|_{\ast1}   \\
 \le & C\|f\|_{\ast1} +CK_{Q,R}\|f\|_{\ast1} \le CK_{Q,R}\|f\|_{\ast1}.
\end{align*}

Now let us estimate $\mathrm{III}$ by two cases.

When $ R \in \mathscr{Q}$, by (\ref{eq2.18}), we have
$$\mathrm{III}=|m_{R}f-m_{\widetilde R}f|  \le \frac{\mu(\eta R)}{\mu(R)}\|f\|_{\ast1 } .$$

When $ R \in \mathscr{Q}_{ex} \setminus  \mathscr{Q}$, by (\ref{eq2.3}) and $\eta\le \alpha$, we obtain
 \begin{align*}
\mathrm{III}=|m_{R}f-m_{\widetilde R}f| \le & |m_{R}f| + |m_{\widetilde R}f| \le \frac {\mu(\eta R)}{\mu(R)} \|f\|_{\ast1}+\frac {\mu(\eta \widetilde R)}{\mu(\widetilde R)} \|f\|_{\ast1}\\
 \le & \frac {\mu(\eta R)}{\mu(R)} \|f\|_{\ast1}+C\|f\|_{\ast1} \le \frac {\mu(\eta R)}{\mu(R)} \|f\|_{\ast1}+CK_{Q,R}\|f\|_{\ast1}.
\end{align*}

Combining $\mathrm{I}$-$\mathrm{III}$, we conclude that
 \begin{align*}
|m_{Q}f-m_{ R}f| \le & |m_{Q}f-m_{\widetilde Q}f|+|m_{\widetilde Q}f-m_{\widetilde R}f|+|m_{R}f-m_{\widetilde R}f| \\
\le &\frac {\mu(\eta Q)}{\mu(Q)} \|f\|_{\ast1}+CK_{Q,R}\|f\|_{\ast1} +\frac {\mu(\eta R)}{\mu(R)} \|f\|_{\ast1}+CK_{Q,R}\|f\|_{\ast}\\
 \le &   CK_{Q,R}\left   [ \frac {\mu(\eta Q)}{\mu(Q)}+\frac {\mu(\eta R)}{\mu(R)} \right  ] \|f\|_{\ast1}.
\end{align*}

Step 3. Prove $\frac{1}{\mu(\eta Q)}\int _{Q}\left | f(x) \right | d\mu(x) \le C\|f\|_{\ast1}$ for any cube   $Q\in \mathscr{Q}_{ex} \setminus  \mathscr{Q}$.

In fact, it is obviously true by (\ref{eq2.3}).

Combining Steps 1-3, we  obtain that $  \|f\|_{\ast3}  \le C\|f\|_{\ast1}$ and hence  $rbmo^{1}_{\mathscr Q}(\mu)\subset rbmo^{3}_{\mathscr Q}(\mu)$.

The implication $rbmo^{3}_{\mathscr Q}(\mu)\subset rbmo^{4}_{\mathscr Q}(\mu)$ is obvious: One only has to consider $(\alpha ,\beta) $-doubling cubes in $rbmo^{3}_{\mathscr Q}(\mu)$.

Let us now prove that $rbmo^{4}_{\mathscr Q}(\mu)\subset rbmo^{1}_{\mathscr Q}(\mu)$, i.e., $\|f\|_{\ast1}\le C\|f\|_{\ast4}$ for any $f\in rbmo^{4}_{\mathscr Q}$ by three steps.

Step A.
Prove
\begin{equation}\label{eq2x.4}
\frac{1}{\mu(\eta Q)}\int_{Q}|f(x)|d\mu(x)\le C\|f\|_{\ast4}
\end{equation}
for any cube   $Q\in \mathscr{Q}_{ex} \setminus  \mathscr{Q}$.

Let $ Q $ be a cube with $ Q \in \mathscr{Q}_{ex} \setminus  \mathscr{Q} $. We consider the same covering as in the proof of (\ref{eq2.8}). Since $Q_{vx_{i}}, \widetilde Q_{vx_{i}} \in \mathscr{Q}_{ex} \setminus  \mathscr{Q}$, by (\ref{eq2.16}), (\ref{eq2.17}) and (i)-(iii) of Proposition \ref{p1.5}, we  have
\begin{equation}\label{eq2.19}
 \begin{aligned}
|m_{Q_{x_{i}}}f| \le & |m_{Q_{x_{i}}}f-m_{\widetilde Q_{vx_{i}}}f|+|m_{\widetilde Q_{vx_{i}}}f| \\
\le & K_{Q_{x_{i}},\widetilde Q_{vx_{i}}}\|f\|_{\ast4}+C\|f\|_{\ast4} \le  C K_{Q_{x_{i}},\widetilde Q_{vx_{i}}}\|f\|_{\ast4}\\
\le & C(K_{Q_{x_{i}}, Q_{vx_{i}}}+K_{Q_{vx_{i}},\widetilde Q_{vx_{i}}})\|f\|_{\ast4}\\
\le & C(K_{Q_{x_{i}}, \alpha^{k}Q_{x_{i}}}+K_{\alpha^{k}Q_{x_{i}},Q_{vx_{i}}}+K_{Q_{vx_{i}},\widetilde Q_{vx_{i}}})\|f\|_{\ast4}\le C\|f\|_{\ast4}.
\end{aligned}
\end{equation}

From this,   (\ref{eq2.19}), (\ref{eq2.10}) and (\ref{eq2.15}), it follows that
\begin{align*}
\frac{1}{\mu(\eta Q)} \int _{Q} |f(x)|d\mu(x)  \le & \frac{1}{\mu(\eta Q)} \sum_{i} \int _{Q_{x_{i}}} |f(x)|d\mu(x)\\
\le & \frac{1}{\mu(\eta Q)} \sum_{i} \left [ \int _{Q_{x_{i}}} |f(x)-m_{Q_{x_{i}}}f|d\mu(x)+\mu(Q_{x_{i}}) |m_{Q_{x_{i}}}f| \right ]  \\
\le &  \frac{1}{\mu(\eta Q)}  \left [\sum_{i} \mu(\eta Q_{x_{i}})\|f\|_{\ast4}+\sum_{i} C \mu(Q_{x_{i}}) \|f\|_{\ast4} \right ] \\
\le &  \frac{1}{\mu(\eta Q)} \left [C \mu(\eta Q)\|f\|_{\ast4}+C \mu(\eta Q) \|f\|_{\ast4} \right ] \le C \|f\|_{\ast4},
\end{align*}
which implies (\ref{eq2x.4}) holds true.

Step B. Prove $|m_{Q}f-m_{R}f| \le C K_{Q,R}\|f\|_{\ast4} $ for any two $(\alpha ,\beta) $-doubling cubes $Q \subset R$ with $Q\in \mathscr{Q}$ and $R\in \mathscr{Q}_{ex}$.

It is obviously true since (\ref{eq2.2}) and (\ref{eq2.16}) are same with each other.

Step C.
Prove
\begin{equation}\label{eq2x.9}
\frac{1}{\mu(\eta Q)}\int_{Q}|f(x)-m_{\widetilde Q}f|d\mu(x)\le C\|f\|_{\ast4} \qquad    \mathrm{ for \ any \ cube} \  Q\in \mathscr{Q} .
\end{equation}

Let $Q$ be a cube in $\mathscr Q$  which is not $(\alpha,\beta)$-doubling or else (\ref{eq2x.9}) holds automatically.
For $\mu$-almost any $x\in Q$, by Proposition \ref{p1.3}(ii), there exists some $(\alpha ,\beta  ) $-doubling cube $Q_{x}$ centered at $x$ with side length $ \alpha ^{-k} (\eta-1) l(Q) $, where $k$ is chosen to be the smallest non-negative integer. By the definition of $Q_{x}$, it is easy to find that $\alpha Q_{x},\cdots, \alpha^{k}Q_{x}$ are not $(\alpha,\beta)$-doubling cubes.    By Lemma \ref{l2.6} with $l(\alpha^{k}Q_{x})=(\eta-1)l(Q)$ and $\eta \le \alpha$,  we  obtain
\begin{equation}\label{exx3.1}
 Q_{x} \subset \alpha^{k}Q_{x}\subset \eta Q \subset \alpha Q \subset \widetilde Q.
\end{equation}
By this and Proposition \ref{p1.5},  we have
\begin{equation}\label{eq2.20}
K_{Q_{x},\widetilde Q} \le C(K_{Q_{x},\alpha^{k}Q_{x}}+K_{\alpha^{k}Q_{x},\eta Q}+K_{\eta Q,\alpha Q}+K_{\alpha Q,\widetilde Q})\le C .
\end{equation}
The first and forth items on the right hand side are bounded by $C$ because there are no ($\alpha,\beta$)-doubling cubes of the form $\alpha^{n}Q_{x} $ between $Q_{x}$ and $\alpha^{k}Q_{x}$ and no $(\alpha,\beta)$-doubling cubes of the form $\alpha^{m}Q$ between $\alpha Q$ and $\widetilde Q$. The second and third items are also bounded by $C$ due to the fact that $\alpha^{k}Q_{x} $ and $\eta Q$ on the one hand and $\eta Q$ and $\alpha Q$ on the other hand have comparable sizes.

Then, by (\ref{eq2.20}), we have
\begin{equation}\label{eq2.21}
|m_{Q_{x}}f-m_{\widetilde Q}f| \le  K_{Q_{x},\widetilde Q}\|f\|_{\ast4}  \le C\|f\|_{\ast4} .
\end{equation}
By the Besicovich covering lemma (Lemma \ref{ll2.1}), there exists a family of  points $\left\{x_{i}\right\} \subset Q$ such that $Q \cap \supp (\mu)$ is covered by a family of cubes $\left \{Q_{x_{i}} \right  \}_{i} $ with bounded overlap. By (\ref{eq2.16}), (\ref{eq2.17}), (\ref{eq2.21}) and (\ref{exx3.1}), we obtain
\begin{align*}
\int_{Q}|f(x)-m_{\widetilde Q}f|d\mu(x) \le & \sum_{i} \int_{Q_{x_{i}}}|f(x)-m_{\widetilde Q}f|d\mu(x)\\
\le &  \sum_{i} \int_{Q_{x_{i}}}|f(x)-m_{Q_{x_{i}}}f|d\mu(x) +\sum_{i}|m_{\widetilde Q}f(x)-m_{Q_{x_{i}}}f|\mu(Q_{x_{i}})  \\
\le &  \sum_{i} \mu(Q_{x_{i}}) C\|f\|_{\ast4}+  \sum_{i}C\|f\|_{\ast4}\mu(Q_{x_{i}})   
\le   C\|f\|_{\ast4}\mu(\eta Q).
\end{align*}

Combining Steps A-C, then we obtain that $\|f\|_{\ast1}\le C\|f\|_{\ast4}$ and hence $rbmo^{4}_{\mathscr Q}(\mu)\subset rbmo^{1}_{\mathscr Q}(\mu)$.

Finally, by $rbmo^{1}_{\mathscr Q}(\mu)\subset rbmo^{3}_{\mathscr Q}(\mu)$, $rbmo^{3}_{\mathscr Q}(\mu)\subset rbmo^{4}_{\mathscr Q}(\mu)$ and $rbmo^{4}_{\mathscr Q}(\mu)\subset rbmo^{1}_{\mathscr Q}(\mu)$, we  obtain that $ rbmo^{1}_{\mathscr Q}(\mu)=rbmo^{3}_{\mathscr Q}(\mu)=rbmo^{4}_{\mathscr Q}(\mu)$ with equivalent norms.

\end{proof}

\section{$rbmo^{1}_{\mathscr Q}(\mu)=rbmo(\mu) \subset RBMO(\mu)$ \label{s3}}
The main aim of this section is to obtain the containment relationships of $RBMO(\mu)$, $rbmo(\mu)$ and $rbmo^{1}_{\mathscr Q}(\mu)$.

\begin{theorem}\label{t3.1}
$rbmo^{1}_{\mathscr Q}(\mu)=rbmo(\mu) $ with equivalent norms.
\end{theorem}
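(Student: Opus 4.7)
The plan is to prove both inclusions. The containment $rbmo(\mu)\subset rbmo^{1}_{\mathscr Q}(\mu)$ with $\|f\|_{\ast1}\le\|f\|_{rbmo(\mu)}$ is immediate, because each of the defining conditions (\ref{eq2.1})--(\ref{eq2.3}) of $rbmo^{1}_{\mathscr Q}(\mu)$ is merely the restriction of the corresponding Yang condition (\ref{eq1.4})--(\ref{eq1.6}) to a smaller family of cubes: any $Q\in\mathscr{Q}$ satisfies $l(Q)\le 1$; any doubling pair $Q\subset R$ with $Q\in\mathscr{Q}$ and $R\in\mathscr{Q}_{ex}$ automatically has $l(Q)\le 1$; and any $Q\in\mathscr{Q}_{ex}\setminus\mathscr{Q}$ satisfies $l(Q)>1$. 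The real work is the reverse inclusion: given $f\in rbmo^{1}_{\mathscr Q}(\mu)$, I must upgrade (\ref{eq2.1})--(\ref{eq2.3}) to Yang's three conditions on their full families.

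Condition (\ref{eq1.4}) is literally (\ref{eq2.1}). To verify (\ref{eq1.6}) on an arbitrary cube $Q$ with $l(Q)>1$, the case $Q\in\mathscr{Q}_{ex}$ is (\ref{eq2.3}); for $Q\notin\mathscr{Q}_{ex}$ (so $l(Q')>1$) I reuse the Besicovich covering scheme from the proof of (\ref{eq2.8}) in Theorem \ref{l2.10} Step I. Namely, for $\mu$-a.e.\ $x\in Q\cap\supp(\mu)$ I pick via Proposition \ref{p1.3}(ii) a small doubling cube $Q_x\in\mathscr{Q}$ centred at $x$ at scale $\alpha^{-k}(\eta-1)$ when $1<\eta<2$ or $\alpha^{-k}$ when $\eta\ge 2$, together with an expansive companion $Q_{vx}$ of side in $(1,\alpha]$ whose contractive doubling is $Q_x$, so that $Q_{vx}\in\mathscr{Q}_{ex}\setminus\mathscr{Q}$; Lemma \ref{l2.6} forces $Q_x\subset\eta Q$ because $l(Q)>1$. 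After extracting a bounded-overlap subfamily via the Besicovich covering lemma I bound $\int_Q|f|\,d\mu\le\sum_i\int_{Q_{x_i}}|f|\,d\mu$ and split each summand into an oscillation and a mean, controlling the oscillation by (\ref{eq2.1}) (using $\widetilde{Q_{x_i}}=Q_{x_i}$ since $Q_{x_i}$ is doubling) and the mean by passing to the doubling cube $\widetilde{Q_{vx_i}}\in\mathscr{Q}_{ex}\setminus\mathscr{Q}$ via (\ref{eq2.2}) and (\ref{eq2.3}). The sum closes via doubling of the $Q_{x_i}$'s, bounded overlap, and $\eta\le\alpha$, exactly as in Theorem \ref{l2.10}.

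For (\ref{eq1.5}) on a doubling pair $Q\subset R$ with $l(Q)\le 1$, the case $R\in\mathscr{Q}_{ex}$ is (\ref{eq2.2}). The outstanding case is $R$ doubling with $R\notin\mathscr{Q}_{ex}$, so $l(R')>1$ and in particular $l(R)>1$. The preceding Step 2 then gives $|m_Rf|\le C\|f\|_{\ast1}$ using doubling of $R$ together with $\eta\le\alpha$. Let $P$ denote the smallest $(\alpha,\beta)$-doubling cube concentric with $Q$ having $l(P)>1$; Proposition \ref{p1.3}(i) guarantees its existence $\mu$-a.e., and since the preceding doubling cube in $Q$'s concentric chain has side length at most $1$, we have $P'\in\mathscr{Q}$ and hence $P\in\mathscr{Q}_{ex}\setminus\mathscr{Q}$. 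Applying Step 2 to $P$ gives $|m_Pf|\le C\|f\|_{\ast1}$, while (\ref{eq2.2}) applied to the doubling pair $Q\subset P$ gives $|m_Qf-m_Pf|\le CK_{Q,P}\|f\|_{\ast1}$. The triangle inequality $|m_Qf-m_Rf|\le|m_Qf-m_Pf|+|m_Pf|+|m_Rf|$ reduces everything to showing $K_{Q,P}\le CK_{Q,R}$.

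The comparison $K_{Q,P}\le CK_{Q,R}$ is the step I expect to be the main technical obstacle, and I would handle it in two subcases. If $l(P)\le l(R)$, then because $P$ is concentric with $Q$ and $x_Q\in R$ we have $P\subset 3R$, so Proposition \ref{p1.5} yields $K_{Q,P}\le K_{Q,3R}\le CK_{Q,R}$. If instead $l(P)>l(R)$, then by the minimality of $P$ every cube in the concentric chain of $Q$ with side length strictly between $1$ and $l(P)$---and therefore, in particular, between $l(R)$ and $l(P)$---is non-doubling; the geometric-series argument underlying Proposition \ref{p1.5}(iii), combined with the growth condition (\ref{eq1.1}) and $\beta>\alpha^d\ge\alpha^n$, then shows that the contribution to $\sum_k\mu(2^kQ)/l(2^kQ)^n$ from indices $k$ with $l(2^kQ)\in(l(R),l(P))$ is bounded by an absolute constant, while the remaining portion is at most $K_{Q,2R}-1\le CK_{Q,R}$. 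Hence $K_{Q,P}\le CK_{Q,R}$ in both subcases, completing the verification of (\ref{eq1.5}) and therefore the theorem.
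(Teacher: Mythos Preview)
Your proposal is correct and follows the same overall strategy as the paper: the easy inclusion $rbmo(\mu)\subset rbmo^{1}_{\mathscr Q}(\mu)$, then upgrading (\ref{eq2.3}) to (\ref{eq1.6}) via a Besicovich covering by small doubling cubes in $\mathscr{Q}$ paired with companions in $\mathscr{Q}_{ex}\setminus\mathscr{Q}$, and upgrading (\ref{eq2.2}) to (\ref{eq1.5}) by bridging through an intermediate doubling cube in $\mathscr{Q}_{ex}\setminus\mathscr{Q}$.

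The one execution difference worth noting is in the last step. The paper selects its intermediate cube $\widetilde{Q_0}$ in the $\alpha$-chain of $2^{N_{Q,R}}Q$ (not of $Q$), and then invokes the structural Remark~\ref{r2.5}(ii) to obtain the clean containment $\widetilde{Q_0}\subset (2^{N_{Q,R}}Q)'\subset 2^{N_{Q,R}}Q$, which via Lemma~\ref{l3.2} immediately yields $K_{Q,\widetilde{Q_0}}\le K_{Q,2^{N_{Q,R}}Q}=K_{Q,R}$ without any case split or series estimate. Your choice of $P$ in $Q$'s own $\alpha$-chain forces the additional dichotomy on $l(P)$ versus $l(R)$ and the geometric-series estimate in Subcase~2; this works (indeed it is just $K_{Q,P}\le C(K_{Q,S_0}+K_{S_0,P})$ with $K_{S_0,P}\le C$ by Proposition~\ref{p1.5}(iii) and $K_{Q,S_0}\le K_{Q,R}+C$ since $l(S_0),l(R)\in(1,\max(\alpha,l(R))]$), but the paper's route via Remark~\ref{r2.5}(ii) is shorter. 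For the covering step the paper also inserts a shortcut case when $\frac{\eta-1}{\eta}l(Q)>1$, covering directly by cubes of side in $(1,\alpha]$ rather than by small doubling cubes; your unified treatment is equally valid and arguably tidier.
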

To prove  Theorem \ref{t3.1}, we will use the following lemma which is easy to check by Definition 1.3.
\begin{lemma}\label{l3.2}
For any cube $Q$ in $\mathbb R^{d}$, if $R$ and $S$ are two cubes in $\mathbb R^{d}$ with $l(R)=l(S)$, $Q\subset R$ and $Q\subset S$, then $K_{Q,R}=K_{Q,S}=K_{Q,2^{N_{Q,R}}Q}=K_{Q,2^{N_{Q,S}}Q}$.
\end{lemma}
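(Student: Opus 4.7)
The plan is to observe that by Definition \ref{d1.4}, the quantity $K_{Q,R}$ is a sum whose summand $\mu(2^k Q)/l(2^k Q)^n$ depends only on $Q$, and whose upper summation index $N_{Q,R}$ depends on $R$ only through the side length $l(R)$. Precisely, $N_{Q,R}$ is by definition the smallest positive integer $k$ such that $2^k l(Q) = l(2^k Q) \ge l(R)$, so $N_{Q,R}$ is a function of $l(Q)$ and $l(R)$ alone. This is the key observation driving every equality in the lemma.

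First I would derive $K_{Q,R} = K_{Q,S}$. Since $l(R) = l(S)$, the defining inequality for $N_{Q,R}$ and $N_{Q,S}$ is identical, so $N_{Q,R} = N_{Q,S}$. Plugging this common value as the upper index in the sum defining $K$ immediately gives $K_{Q,R} = K_{Q,S}$.

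Next I would prove $K_{Q,R} = K_{Q, 2^{N_{Q,R}}Q}$. One first checks that $K_{Q, 2^{N_{Q,R}}Q}$ is a valid object in the sense of Definition \ref{d1.4}, i.e.\ that $Q \subset 2^{N_{Q,R}}Q$; this is automatic since these cubes are concentric and $l(2^{N_{Q,R}}Q) \ge l(Q)$. Then $N_{Q,2^{N_{Q,R}}Q}$ is by definition the smallest positive integer $k$ with $2^k l(Q) \ge l(2^{N_{Q,R}}Q) = 2^{N_{Q,R}} l(Q)$, which is exactly $k = N_{Q,R}$. Substituting this into the formula yields the desired equality. The analogous argument with $S$ in place of $R$ gives $K_{Q,S} = K_{Q,2^{N_{Q,S}}Q}$, completing the chain of equalities.

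I do not anticipate any genuine obstacle: the whole statement is a bookkeeping consequence of the fact that $K_{Q,R}$ is insensitive to the position or exact side length of $R$ beyond what determines $N_{Q,R}$. The only point that requires a word of verification is the well-definedness of $K_{Q,2^{N_{Q,R}}Q}$ (the inclusion $Q \subset 2^{N_{Q,R}}Q$), which is immediate from concentricity.
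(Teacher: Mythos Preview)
Your proposal is correct and matches the paper's approach: the paper does not write out a proof but simply remarks that the lemma ``is easy to check by Definition~\ref{d1.4},'' and your argument is exactly that direct verification --- observing that $N_{Q,R}$ depends on $R$ only through $l(R)$, so equal side lengths force equal upper indices and hence equal sums.
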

\begin{proof}[Proof of Theorem \ref{t3.1}]
By the definitions of $rbmo^{1}_{\mathscr Q}(\mu)$ and $rbmo(\mu) $, it is easy to find that $rbmo(\mu) \subset rbmo^{1}_{\mathscr Q}(\mu) $, i.e.,  $\|f\|_{\ast1}\le C \|f\|_{rbmo(\mu)}$ for any $f \in rbmo(\mu)$, so we only need to prove
\begin{equation}
rbmo^{1}_{\mathscr Q}(\mu) \subset rbmo(\mu),\ \mathrm{ i.e.}, \ \|f\|_{rbmo(\mu)} \le C\|f\|_{\ast1} \mathrm {\ for \ any}\ f \in rbmo_{\mathscr Q}(\mu).
\end{equation}

We first prove that if $Q$ is a cube with $l(Q) >1$, then
\begin{equation}\label{eq3.1}
 \frac{1}{\mu(\eta Q)} \int _{Q}\left | f(x)   \right | d\mu(x) \le C\|f\|_{\ast1}.
\end{equation}
In fact, if $Q \in \mathscr Q_{ex} \setminus  \mathscr Q$, (3.1) is obviously true. If $Q \notin \mathscr Q_{ex}  $, we prove (\ref{eq3.1}) under the following two cases.

Case(i) \quad  $\frac {\eta-1}{\eta}l(Q)>1$. For any $x\in Q \cap \supp (\mu)$, let $Q_{x}$ be a cube centered at $x$ with $l(Q_{x})=\alpha^{-N_{1}} \frac {\eta-1}{\eta}l(Q)$ for some integer $N_{1} \ge 0 $ such that $1<l(Q_{x})\le \alpha$. Then $l(\eta Q_{x})\le l((\eta-1)Q)$, and hence $\eta Q_{x} \subset \eta Q$ by Lemma \ref{l2.6}.
    By the Besicovich covering lemma (Lemma \ref{ll2.1}), we can find a family of points $ \left\{ x_{i} \right\}_{i} \subset Q \cap \supp (\mu) $ such that $Q \cap \supp (\mu)$ is covered by a family of cubes $\left \{Q_{x_{i}} \right  \}_{i} $ with bounded overlap. Then by Proposition \ref{p2.4}(iv) and Lemma \ref{l2.6}, we  obtain $Q_{x_{i}}\in \mathscr Q_{ex} \setminus   \mathscr Q$ and $\eta Q_{x_{i}} \subset \eta Q$. By this and (\ref{eq2.3}), we have
     \begin{equation}\label{eq3.9}
 \begin{aligned}
\frac{1}{\mu(\eta Q)} \int _{Q} |f(x)|d\mu(x)  \le & \frac{1}{\mu(\eta Q)} \sum_{i} \int _{Q_{x_{i}}} |f(x)|d\mu(x)  \\
= & \frac{1}{\mu(\eta Q)} \sum_{i}  \mu(\eta Q_{x_{i}})  \frac{1}{\mu(\eta Q_{x_{i}})}  \int _{Q_{x_{i}}} |f(x)|d\mu(x) \\
\le &  \frac{1}{\mu(\eta Q)} \sum_{i}  \mu(\eta Q_{x_{i}})\|f\|_{\ast1} \le C \frac{1}{\mu(\eta Q)} \mu(\eta Q) \|f\|_{\ast1}\\
\le & C\|f\|_{\ast1},
\end{aligned}
\end{equation}
which implies (\ref{eq3.1}) holds true.

Case(ii) \quad  $\frac {\eta-1}{\eta}l(Q)\le 1$. For $\mu$-almost any $x\in Q$, by Proposition \ref{p1.3}(ii), there exists some $(\alpha ,\beta  ) $-doubling cube $Q_{x}$ centered at $x$ with side length $ \alpha ^{-k} \frac {\eta-1}{\eta} $, where $k$ is chosen to be the smallest non-negative integer. From the choice  of $Q_{x}$, it's easy to see that $ l(Q_{x})\le\frac {\eta-1}{\eta} < l(\widetilde{\alpha Q_{x}} )$, then  $l(\alpha^{k}Q_{x}) = \frac {\eta-1}{\eta}< l(\alpha^{k+1}Q_{x}) $.

    Let $n:=\log_{\alpha }{\frac{\eta}{\eta-1} }$. By $\eta>1$, we have $n>0$. We find that
    \begin{equation*}
\begin{aligned}
&\alpha^{n} l(\alpha^{k}Q_{x}) \le \alpha^{n} \frac {\eta-1}{\eta} < \alpha^{n} l(\alpha^{k+1}Q_{x})
\Longleftrightarrow  l( \alpha^{n+k}Q_{x}) \le 1 <  l( \alpha^{n+k+1}Q_{x}).\\
\end{aligned}
\end{equation*}
Then the length of the cube $\alpha^{\left \lceil n \right \rceil +k}Q_{x} $ or $\alpha^{\left \lceil n \right \rceil +k+1}Q_{x}$ is in the range of $(1,\alpha]$ and we denote it by $Q_{vx}$.
 Then we can obtain $Q_{x}\in \mathscr Q$, $Q_{vx}\in \mathscr Q_{ex}\setminus \mathscr Q$, $Q_{x}\subset Q_{vx}$ and $\eta Q_{x} \subset \eta Q$ by $l(Q_{x})\le \frac {\eta-1}{\eta} \le \frac {\eta-1}{\eta}l(Q) $ and Lemma \ref{l2.6}. By the Besicovich covering lemma (Lemma \ref{ll2.1}), we  find a family of points $ \left\{ x_{i} \right\}_{i} \subset Q \cap \supp (\mu) $ such that $Q \cap \supp (\mu)$ is covered by a family of cubes $\left \{Q_{x_{i}} \right  \}_{i} $ with bounded overlap. Moreover, by (\ref{eq2.19}) and Theorem \ref{l2.12},  we have $|m_{Q_{x_{i}}}f|\le C\|f\|_{\ast1}$.
From this,  (\ref{eq2.1}) and $\eta Q_{x_{i}}\subset \eta Q$, we deduce that
\begin{align*}
\frac{1}{\mu(\eta Q)} \int _{Q} |f(x)|d\mu(x)  \le & \frac{1}{\mu(\eta Q)} \sum_{i} \int _{Q_{x_{i}}} |f(x)|d\mu(x)\\
\le & \frac{1}{\mu(\eta Q)} \sum_{i} \left [ \int _{Q_{x_{i}}} |f(x)-m_{Q_{x_{i}}}f|d\mu(x)+\mu(Q_{x_{i}}) |m_{Q_{x_{i}}}f| \right ]  \\
\le &  \frac{1}{\mu(\eta Q)} \sum_{i} \left [ \mu(\eta Q_{x_{i}})\|f\|_{\ast1}+C \mu(Q_{x_{i}}) \|f\|_{\ast1} \right ] \\
\le &  \frac{1}{\mu(\eta Q)} [C{\mu(\eta Q)}\|f\|_{\ast1}+C{\mu(\eta Q)}\|f\|_{\ast1}] \le C \|f\|_{\ast1},
\end{align*}
which implies (\ref{eq3.1}) holds true.

Next,  let us prove
\begin{equation}\label{eq3.3}
|m_{Q}f-m_{R}f| \le K_{Q,R} \|f\|_{\ast1}
\end{equation}
for any two $(\alpha ,\beta) $-doubling cubes $Q\subset R $ with $l(Q)\le1$.

If $R \in \mathscr Q_{ex} \setminus  \mathscr Q$, (\ref{eq3.3}) is obviously true, so we only need to consider $R \notin \mathscr Q_{ex} $.

For any cube $R \notin \mathscr Q_{ex} $ with $Q\subset R$, we can obtain a cube $2^{N_{Q,R}}Q$ where $N_{Q,R} $ is the first integer $k$ such that $l(2^{k}Q)\ge l(R)$, then $K_{Q,R}=K_{Q,2^{N_{Q,R}}Q}$ by Lemma \ref{l3.2}. Moreover, by $l(2^{N_{Q,R}}Q)\ge l(R)>1$, we know that $2^{N_{Q,R}}Q \notin \mathscr Q$.  Here we remark that whether $2^{N_{Q,R}}Q \in \mathscr Q_{ex} \setminus  \mathscr Q$ is unknown, so we will consider  (3.3) in the following two cases.

Case(i) \quad If $2^{N_{Q,R}}Q \in \mathscr Q_{ex} \setminus  \mathscr Q$,  then $\widetilde{2^{N_{Q,R}}Q }\in \mathscr Q_{ex} \setminus  \mathscr Q$ by Proposition 2.4(ii). By (\ref{eq2.2}),  (\ref{eq2.3}),  (\ref{eq3.1}),  $\eta \le \alpha$,  (i) and (iii) of Proposition \ref{p1.5} and Lemma \ref{l3.2}, we obtain
\begin{align*}
|m_{Q}f-m_{R}f| \le & |m_{Q}f|+|m_{R}f|\le |m_{Q}f-m_{\widetilde{2^{N_{Q,R}}Q }}f|+|m_{\widetilde{2^{N_{Q,R}}Q }}f|+|m_{R}f|\\
\le & K_{Q,\widetilde{2^{N_{Q,R}}Q }}\|f\|_{\ast1}+\frac {\mu(\eta \widetilde{2^{N_{Q,R}}Q })}{\mu(\widetilde{2^{N_{Q,R}}Q })}\frac {1}{\mu(\widetilde{2^{N_{Q,R}}Q })} \int _{\widetilde{2^{N_{Q,R}}Q }} |f(x)| d\mu(x) \\
+ & \frac {\mu(\eta R)}{\mu(R)} \frac {1}{\mu(\eta R)}\int _{R}|f(x)| d\mu(x) \\
\le & (K_{Q,{2^{N_{Q,R}}Q }} +  K_{2^{N_{Q,R}}Q,\widetilde {2^{N_{Q,R}}Q }}      )      \|f\|_{\ast1}+C\|f\|_{\ast1}+C\|f\|_{\ast1} \\
\le  & (K_{Q,R}+C)\|f\|_{\ast1}+C\|f\|_{\ast1} \le CK_{Q,R}\|f\|_{\ast1},
\end{align*}
which implies (\ref{eq3.3}) holds true.

Case(ii) \quad  If $2^{N_{Q,R}}Q \notin \mathscr Q_{ex} \setminus  \mathscr Q$, it means that $l((2^{N_{Q,R}}Q)')>1 $. Let $Q_{0}$ be a cube centered at $Q$ with  $l(Q_{x})=\alpha^{-N_{3}} l(2^{N_{Q,R}}Q)$ for some integer $N_{3} \ge 0 $ such that $1<l(Q_{0})\le \alpha$. Then  $\widetilde Q_{0}\in \mathscr Q_{ex} \setminus  \mathscr Q$,  $ l(Q_{0}') \le 1 $   and
\begin{equation}\label{3x.2}
Q \subset Q_{0} \subset \widetilde Q_{0} \subset (2^{N_{Q,R}}Q)' \subset 2^{N_{Q,R}}Q
\end{equation}
by Remark \ref{r2.5}(ii). From this, Proposition \ref{p1.5}(i) and Lemma \ref{l3.2}, it follows that
 \begin{equation}\label{eq3.9}
 \begin{aligned}
K_{Q,\widetilde Q_{0}}\le K_{Q,2^{N_{Q,R}}Q} =K_{Q,R}.
\end{aligned}
\end{equation}
Consequently, by (\ref{3x.2}),  (\ref{eq2.2}), $\eta \le \alpha $, (\ref{eq3.1}) with $l(R)>1$ and (\ref{eq3.9}), we obtain
\begin{align*}
|m_{Q}f-m_{R}f| \le & |m_{Q}f-m_{\widetilde Q_{0}}f|+|m_{\widetilde Q_{0}}f|+|m_{R}f| \\
\le & K_{Q,\widetilde Q_{0}}\|f\|_{\ast1}+\frac{\mu(\eta \widetilde Q_{0})}{\mu(\widetilde Q_{0})} \|f\|_{\ast1}+\frac{\mu(\eta R)}{\mu(R)}\|f\|_{\ast1}  \\
\le & K_{Q,R}\|f\|_{\ast1} +C\|f\|_{\ast1}+C\|f\|_{\ast1} \le C K_{Q,R} \|f\|_{\ast1},
\end{align*}
which implies (\ref{eq3.3}) holds true.

Finally, (2.1) and (2.4) are same with each other. From this, (\ref{eq3.1}) and (\ref{eq3.3}), it follows that $rbmo^{1}_{\mathscr Q}(\mu)=rbmo(\mu) $ with equivalent norms.
\end{proof}

Yang \cite{5} pointed out that it is easy to see $rbmo(\mu)\subset RBMO(\mu)$. However, the proof seems not trivial from their definitions. We add the proof for the sake of completeness.

\begin{theorem}\label{t3.x1}
Let $RBMO(\mu)$ and $rbmo(\mu)$ be, respectively, defined as in Definitions \ref{d1.1} and \ref{d1.6}  with $1< \eta \le \alpha < \infty$. Then $rbmo(\mu)\subset RBMO(\mu)$.
\end{theorem}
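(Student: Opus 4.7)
The plan is to verify the two defining inequalities $(\ref{eq1.2})$ and $(\ref{eq1.3})$ of $RBMO(\mu)$ directly from the three conditions $(\ref{eq1.4})$--$(\ref{eq1.6})$ defining $rbmo(\mu)$, with the resulting constant bounded by $C\|f\|_{rbmo(\mu)}$. Using the same parameters $\eta,\alpha,\beta$ on both sides, the key observation that makes the argument short is that condition $(\ref{eq1.5})$ only restricts $l(Q)\le 1$ and imposes \emph{no} constraint on the size of the larger cube $R$; so it already supplies $(\ref{eq1.3})$ in every case with $l(Q)\le 1$.

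For $(\ref{eq1.2})$, I would split on the size of $Q$. If $l(Q)\le 1$, then $(\ref{eq1.4})$ is exactly $(\ref{eq1.2})$. If $l(Q)>1$, then $\widetilde{Q}$ is $(\alpha,\beta)$-doubling with $l(\widetilde{Q})\ge l(Q)>1$, so applying $(\ref{eq1.6})$ to $\widetilde{Q}$ together with $\eta\le\alpha$ and the doubling property gives
$$|m_{\widetilde{Q}}f|\le \frac{\mu(\eta\widetilde{Q})}{\mu(\widetilde{Q})}\,\|f\|_{rbmo(\mu)}\le C\,\|f\|_{rbmo(\mu)}.$$
Combining this with $(\ref{eq1.6})$ applied to $Q$ via the triangle inequality $|f-m_{\widetilde{Q}}f|\le |f|+|m_{\widetilde{Q}}f|$ and $\mu(Q)\le\mu(\eta Q)$ yields $(\ref{eq1.2})$.

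For $(\ref{eq1.3})$ with $(\alpha,\beta)$-doubling cubes $Q\subset R$, I would again split on $l(Q)$. If $l(Q)\le 1$, condition $(\ref{eq1.5})$ gives $|m_Qf-m_Rf|\le K_{Q,R}\|f\|_{rbmo(\mu)}$ directly, with no restriction needed on $l(R)$. If $l(Q)>1$, then $l(R)\ge l(Q)>1$ as well, and the same calculation as in the previous paragraph bounds $|m_Qf|$ and $|m_Rf|$ individually by $C\|f\|_{rbmo(\mu)}$; the triangle inequality together with the trivial bound $K_{Q,R}\ge 1$ then closes the case.

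I foresee no serious obstacle in this plan: the proof reduces to routine case analysis once one observes that $(\ref{eq1.5})$ already permits $l(R)$ to be arbitrary. The only point requiring a moment of thought is the uniform bound $|m_Sf|\le C\|f\|_{rbmo(\mu)}$ for a doubling cube $S$ with $l(S)>1$, and this is immediate from $(\ref{eq1.6})$ combined with $\eta\le\alpha$ and $\mu(\alpha S)\le\beta\mu(S)$.
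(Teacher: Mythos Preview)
Your proposal is correct and follows essentially the same approach as the paper's proof: both split each of the two $RBMO(\mu)$ conditions according to whether $l(Q)\le 1$ or $l(Q)>1$, invoke $(\ref{eq1.4})$ or $(\ref{eq1.5})$ directly in the former case, and in the latter case use the triangle inequality together with $(\ref{eq1.6})$, the doubling property, $\eta\le\alpha$, and $K_{Q,R}\ge 1$. Your key observation that $(\ref{eq1.5})$ imposes no size restriction on $R$ is exactly what the paper uses implicitly when declaring the $l(Q)\le 1$ case ``obviously true.''
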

\begin{proof}
 Let us prove that, for any $f\in rbmo(\mu)$,
$$\frac{1}{\mu(\eta Q)}\int_{Q}|f(x)-m_{\widetilde Q}f|d\mu(x)\le C \|f\|_{rbmo(\mu)} \qquad \mathrm{for \ any \ cube } \ Q\subset \mathbb R^{d} .$$

If $l(Q)\le 1$, it is obviously true.

If $l(Q)>1$, it is easy to find that $l(\widetilde Q)>1$. So by (\ref{eq1.6}) and $\eta \le \alpha$, we  obtain
\begin{equation}\label{eq3x.1}
\begin{aligned}
\frac{1}{\mu(\eta Q)}\int_{Q}|f(x)-m_{\widetilde Q}f|d\mu(x) \le & \frac{1}{\mu(\eta Q)}\int_{Q}(|f(x)|+|m_{\widetilde Q}f|)d\mu(x) \\
\le & \frac{1}{\mu(\eta Q)}\int_{Q}|f(x)|d\mu(x)+\frac{\mu(Q)}{\mu(\eta Q)}|m_{\widetilde Q}f| \\
\le & C\|f\|_{rbmo(\mu)}.
\end{aligned}
\end{equation}

Then let us prove that, for any two $(\alpha,\beta)$-doubling cubes $Q\subset R$,
$$|m_{Q}f-m_{R}f|\le CK_{Q,R}\|f\|_{rbmo(\mu)}.$$

If $l(Q)\le 1$, it is obviously true.

If $l(Q)>1$, by (\ref{eq1.6}), $K_{Q,R}\ge 1$ and $\eta \le \alpha$, we  obtain
\begin{align*}
|m_{Q}f-m_{R}f|\le & |m_{Q}f|+|m_{R}f|\le m_{Q}|f|+m_{R}|f|  \\
\le & \frac{\mu(\eta Q)}{\mu(Q)}\frac{1}{\mu(\eta Q)}\int_{Q}|f(x)|d\mu(x) +\frac{\mu(\eta R)}{\mu(R)}\frac{1}{\mu(\eta R)}\int_{R}|f(x)|d\mu(x) \\
\le & C\|f\|_{rbmo(\mu)} \le CK_{Q,R}\|f\|_{rbmo(\mu)}.
\end{align*}
From this and (\ref{eq3x.1}), it follows that  $\|f\|_{RBMO(\mu)}\le C\|f\|_{rbmo(\mu)} $ for any $f\in rbmo(\mu)$ and hence $rbmo(\mu) \subset RBMO(\mu) $.
\end{proof}

By Theorems \ref{l2.10},  \ref{l2.12} and \ref{t3.1}, we know that the four bmo-type spaces, $rbmo^{i}_{\mathscr Q}(\mu)$ $(i= \mathrm 1,\mathrm 2,\mathrm 3,\mathrm 4)$, and $rbmo(\mu)$ of Yang are equivalent with each other. Then the spaces $rbmo^{1}_{\mathscr Q}(\mu)\textendash rbmo^{4}_{\mathscr Q}(\mu)$ inherit all the properties of $rbmo(\mu)$. Therefore, by \cite[Proposition 2.2]{5} and Theorem \ref{t3.x1}, we have the following Proposition \ref{2.9}.
\begin{proposition}\label{2.9}
Let $i= \mathrm 1,\mathrm 2,\mathrm 3,\mathrm 4$.
\begin{itemize}
\item[\rm(i)]  $ rbmo^{i}_{\mathscr Q} (\mu)$ is a Banach space of functions (modulo
additive constants).
\item[\rm(ii)] $L^{\infty}(\mu) \subset rbmo^{i}_{\mathscr Q}(\mu) \subset RBMO(\mu)$  with $C\|f\|_{RBMO(\mu)} \le \|f\|_{\ast i} \le C\|f\|_{L^{\infty}{(\mu})}$.
\item[\rm(iii)] If $ f \in rbmo^{i}_{\mathscr Q}(\mu)$, then $|f|\in rbmo^{i}_{\mathscr Q}(\mu)$ and $\left \| |f| \right \| _{\ast i} \le C \|f\|_{\ast i} $.
\item[\rm(iv)] If $f,g\in rbmo^{i}_{\mathscr Q}(\mu)$, then $\min(f,g),\max(f,g) \in rbmo^{i}_{\mathscr Q} (\mu)$ and
$$ ||\min(f,g)||_{\ast i},||\max(f,g)||_{\ast i}\le C(\|f\|_{\ast i}+||g||_{\ast i}).$$
\end{itemize}
\end{proposition}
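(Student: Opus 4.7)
The plan is to transport every property in (i)--(iv) from Yang's space $rbmo(\mu)$ to each of the four candidate spaces using the fact that all five norms are mutually equivalent. Indeed, Theorems \ref{l2.10}, \ref{l2.12}, and \ref{t3.1} together show that $rbmo^{1}_{\mathscr Q}(\mu)=rbmo^{2}_{\mathscr Q}(\mu)=rbmo^{3}_{\mathscr Q}(\mu)=rbmo^{4}_{\mathscr Q}(\mu)=rbmo(\mu)$ with equivalent norms, so it suffices to verify each clause for $(rbmo(\mu),\|\cdot\|_{rbmo(\mu)})$; the corresponding statement for $\|\cdot\|_{\ast i}$ then follows by a constant adjustment.

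For (i), completeness modulo constants is preserved by passage to an equivalent norm, and $rbmo(\mu)$ is a Banach space modulo constants by \cite[Proposition 2.2]{5}. For (ii), the embedding $L^{\infty}(\mu)\subset rbmo(\mu)$ with $\|f\|_{rbmo(\mu)}\le C\|f\|_{L^{\infty}(\mu)}$ is immediate from Definition \ref{d1.6}: each oscillation in \eqref{eq1.4}--\eqref{eq1.5} is at most $2\|f\|_{L^{\infty}(\mu)}\le 2K_{Q,R}\|f\|_{L^{\infty}(\mu)}$, and the remaining condition \eqref{eq1.6} is bounded by $\|f\|_{L^{\infty}(\mu)}$. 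The other inclusion $rbmo(\mu)\subset RBMO(\mu)$ with the norm estimate is precisely Theorem \ref{t3.x1}. Chaining these with the equivalence yields the full chain in (ii) with the advertised constants.

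Claims (iii) and (iv) again reduce to \cite[Proposition 2.2]{5}. For (iii), one can also argue directly: for any cube $Q$, the pointwise inequality $\bigl||f(x)|-|m_{\widetilde Q}f|\bigr|\le |f(x)-m_{\widetilde Q}f|$ and Jensen's inequality combine to bound the oscillation of $|f|$ over $Q$ by twice the oscillation of $f$, with analogous arguments on $(\alpha,\beta)$-doubling means and on cubes in $\mathscr Q_{ex}\setminus\mathscr Q$. Claim (iv) then follows from (iii) and the identities
\[
\max(f,g)=\tfrac{1}{2}\bigl(f+g+|f-g|\bigr),\qquad \min(f,g)=\tfrac{1}{2}\bigl(f+g-|f-g|\bigr),
\]
together with the triangle inequality for $\|\cdot\|_{\ast i}$. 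I anticipate no substantive obstacle: all of the delicate analysis has already been carried out in the equivalence theorems and in Theorem \ref{t3.x1}, so the present proposition is essentially a bookkeeping corollary, and the only care needed is to track the multiplicative constants that arise when transferring an estimate across the chain of equivalent norms.
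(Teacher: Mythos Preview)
Your proposal is correct and matches the paper's own argument essentially verbatim: the paper simply observes that, by Theorems \ref{l2.10}, \ref{l2.12}, and \ref{t3.1}, the spaces $rbmo^{i}_{\mathscr Q}(\mu)$ coincide with $rbmo(\mu)$ with equivalent norms, and then invokes \cite[Proposition 2.2]{5} together with Theorem \ref{t3.x1} to obtain (i)--(iv). Your additional direct verifications for (ii)--(iv) are sound but not needed, since the cited references already cover them.
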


\noindent Shining Li,  Haijing Zhao and Baode Li (Corresponding author)
\medskip

\noindent College of Mathematics and System Sciences\\
 Xinjiang University\\
 Urumqi, 830017\\
P. R. China
\smallskip

\noindent{E-mail }:\\
\texttt{shiningli1998@qq.com} (Shining Li)\\
\texttt{1845423133@qq.com} (Haijing Zhao)\\
\texttt{baodeli@xju.edu.cn} (Baode Li)\\

\bigskip\medskip


\begin{thebibliography}{30}
\vspace{-0.3cm}
\bibitem{1}
D.~Goldberg, A local version of real Hardy spaces, {\it Duke~Math.~J.}, {\bf46(1)} (1979), 27-42.

\vspace{-0.3cm}
\bibitem{2}
G.~Han, Da.~Yang and Do.~Yang., $h^{1}$, bmo, blo and Littlewood-Paley {\it g}-functions with non-doubling measures, {\it Rev.~Mat.~Iberoam.}, {\bf25} (2009), 595-667.


\vspace{-0.3cm}
\bibitem{XX}
G.~Hu, Da.~Yang and Do.~Yang., A new  characterization of $RBMO(\mu) $ by  John-Str\"omberg sharp maximal functions, {\it Czechoslovak.~Math.~J.}, {\bf59}({\bf134}) (2009), 159-171.


\vspace{-0.3cm}
\bibitem{X1}
F.~John and L.~Nirenberg, On functions of bounded mean oscillation, {\it Pure Appl.~Math.}, {\bf14} (1961), 415-426.

\vspace{-0.3cm}
\bibitem{3}
J.~Mateu, P.~Mattila, A.~Nicolau and J.~Orobitg, BMO for nondoubling measures, {\it Duke Math.~J.}, {\bf102} (2000), 533-565.

\vspace{-0.3cm}
\bibitem{7}
F.~Nazarov, S.~Treil and A.~Volberg, Cauchy integral and Calder\'on-Zygmund operatiors on nonhomogeneous spaces, {\it Internat.~Math.~Res.}, {\bf15} (1997), 703-726.



\vspace{-0.3cm}
\bibitem{4}
X.~Tolsa,  BMO, $H^{1}$, and Calder\'on-Zygmund for non doubling measures, {\it Math. Ann.}, {\bf319} (2001), 89-149.

\vspace{-0.3cm}
\bibitem{5}
D.~Yang, Local Hardy and BMO spaces on non-homogeneous spaces, {\it J.} {\it Aust.~Math.~Soc.}, {\bf79} (2005), 149-182.

\vspace{-0.3cm}
\bibitem{6}
D.~Yang, D.~Yang and G.~Hu, The Hardy Space $H^{1}$ with Non-doubling Measures and Their Applications, {\it Lecture ~Notes ~in ~Mathematics} {\bf2084}, Springer, Cham, 2017.




\end{thebibliography}
\end{document}